\begin{document}

\newtheorem{theorem}{Theorem}
\newtheorem{proposition}{Proposition}
\newtheorem{lemma}{Lemma}
\newtheorem{corollary}{Corollary}
\newtheorem{definition}{Definition}
\newtheorem{remark}{Remark}
\newcommand{\tex}{\textstyle}
\numberwithin{equation}{section} \numberwithin{theorem}{section}
\numberwithin{proposition}{section} \numberwithin{lemma}{section}
\numberwithin{corollary}{section}
\numberwithin{definition}{section} \numberwithin{remark}{section}
\newenvironment{proofof}[1][Proof of Lemma \ref{Le 42}]{\textbf{#1.} }{\ \rule{0.5em}{0.5em}}
\newcommand{\ren}{\mathbb{R}^N}
\newcommand{\re}{\mathbb{R}}
\newcommand{\n}{\nabla}
\newcommand{\p}{\partial}
\newcommand{\iy}{\infty}
\newcommand{\pa}{\partial}
\newcommand{\fp}{\noindent}
\newcommand{\ms}{\medskip\vskip-.1cm}
\newcommand{\mpb}{\medskip}
\newcommand{\AAA}{{\bf A}}
\newcommand{\BB}{{\bf B}}
\newcommand{\CC}{{\bf C}}
\newcommand{\DD}{{\bf D}}
\newcommand{\EE}{{\bf E}}
\newcommand{\FF}{{\bf F}}
\newcommand{\GG}{{\bf G}}
\newcommand{\oo}{{\mathbf \omega}}
\newcommand{\Am}{{\bf A}_{2m}}
\newcommand{\CCC}{{\mathbf  C}}
\newcommand{\II}{{\mathrm{Im}}\,}
\newcommand{\RR}{{\mathrm{Re}}\,}
\newcommand{\eee}{{\mathrm  e}}
\newcommand{\LL}{L^2_\rho(\ren)}
\newcommand{\LLL}{L^2_{\rho^*}(\ren)}
\renewcommand{\a}{\alpha}
\renewcommand{\b}{\beta}
\newcommand{\g}{\gamma}
\newcommand{\G}{\Gamma}
\renewcommand{\d}{\delta}
\newcommand{\D}{\Delta}
\newcommand{\e}{\varepsilon}
\newcommand{\var}{\varphi}
\newcommand{\lll}{\l}
\renewcommand{\l}{\lambda}
\renewcommand{\o}{\omega}
\renewcommand{\O}{\Omega}
\newcommand{\s}{\sigma}
\renewcommand{\t}{\tau}
\renewcommand{\th}{\theta}
\newcommand{\z}{\zeta}
\newcommand{\wx}{\widetilde x}
\newcommand{\wt}{\widetilde t}
\newcommand{\noi}{\noindent}
\newcommand{\uu}{{\bf u}}
\newcommand{\xx}{{\bf x}}
\newcommand{\yy}{{\bf y}}
\newcommand{\zz}{{\bf z}}
\newcommand{\aaa}{{\bf a}}
\newcommand{\cc}{{\bf c}}
\newcommand{\jj}{{\bf j}}
\newcommand{\ggg}{{\bf g}}
\newcommand{\UU}{{\bf U}}
\newcommand{\YY}{{\bf Y}}
\newcommand{\HH}{{\bf H}}
\newcommand{\GGG}{{\bf G}}
\newcommand{\VV}{{\bf V}}
\newcommand{\ww}{{\bf w}}
\newcommand{\vv}{{\bf v}}
\newcommand{\hh}{{\bf h}}
\newcommand{\di}{{\rm div}\,}
\newcommand{\ii}{{\rm i}\,}
\newcommand{\inA}{\quad \mbox{in} \quad \ren \times \re_+}
\newcommand{\inB}{\quad \mbox{in} \quad}
\newcommand{\inC}{\quad \mbox{in} \quad \re \times \re_+}
\newcommand{\inD}{\quad \mbox{in} \quad \re}
\newcommand{\forA}{\quad \mbox{for} \quad}
\newcommand{\whereA}{,\quad \mbox{where} \quad}
\newcommand{\asA}{\quad \mbox{as} \quad}
\newcommand{\andA}{\quad \mbox{and} \quad}
\newcommand{\withA}{,\quad \mbox{with} \quad}
\newcommand{\orA}{,\quad \mbox{or} \quad}
\newcommand{\atA}{\quad \mbox{at} \quad}
\newcommand{\onA}{\quad \mbox{on} \quad}
\newcommand{\ef}{\eqref}
\newcommand{\mc}{\mathcal}
\newcommand{\mf}{\mathfrak}

\newcommand{\ssk}{\smallskip}
\newcommand{\LongA}{\quad \Longrightarrow \quad}
\def\com#1{\fbox{\parbox{6in}{\texttt{#1}}}}
\def\N{{\mathbb N}}
\def\A{{\cal A}}
\newcommand{\de}{\,d}
\newcommand{\eps}{\varepsilon}
\newcommand{\be}{\begin{equation}}
\newcommand{\ee}{\end{equation}}
\newcommand{\spt}{{\mbox spt}}
\newcommand{\ind}{{\mbox ind}}
\newcommand{\supp}{{\mbox supp}}
\newcommand{\dip}{\displaystyle}
\newcommand{\prt}{\partial}
\renewcommand{\theequation}{\thesection.\arabic{equation}}
\renewcommand{\baselinestretch}{1.1}
\newcommand{\Dm}{(-\D)^m}

\title
{\bf Variational approach for a class of cooperative systems}

\author{P.~\'Alvarez-Caudevilla}

\address{Universidad Carlos III de Madrid,
Av. Universidad 30, 28911-Legan\'es, Spain -- Work phone number:
+34-916249099} \email{alvcau.pablo@gmail.com}

\keywords{Coexistence
states. Cooperative systems. Variational Methods. Non--local problems.}

\thanks{The author is partially supported by the Ministry of Science and Innovation of
Spain under the grant MTM2009-08259.}

 \subjclass{35K40, 35K50, 35K57, 35K65. }
\date{\today}





\begin{abstract}

The aim of this 
work is to ascertain the characterization of the 
existence of coexistence states for a class of cooperative systems 
supported by the study of an associated non--local 
equation through classical variational methods. Thanks to those results we are able 
to obtain the blow--up behaviour of the solutions in the whole domain for 
certain values of the main continuation parameter.

\end{abstract}

\maketitle

\section{Introduction}


\subsection{Model, spatial distribution and notation}


\noindent We consider the following cooperative elliptic system
\begin{equation}
\label{11}
  \left\{ \begin{array}{ll} \begin{array}{l}
  -\Delta u=\l u + \a v - a f(x,u) u\\
  -\Delta v=\b u + \l v  \end{array}
  & \quad \hbox{in }\;\; \O,  \\ (u,v)=(0,0)
  &  \quad \hbox{on }\;\;\p \O.\\
  \end{array}\right.
\end{equation}
where $\O$ is a bounded domain of $\re^N$,
$N\geq 1$, with boundary $\p \O$ of class $\mc{C}^{2+\mu}$ for some
$\mu\in(0,1)$, $\l\in\re$ $\a>0$ and $\b>0$ are regarded as real 
continuation parameters, $\D$ stands
for the Laplacian operator in $\re^N$, and $a\in \mc{C}^\mu(\bar
\O)$ is a non-negative function satisfying the following
hypothesis, which will be maintained throughout this work:
\begin{enumerate}
    \item[A.] The open set
    \begin{equation*}
        \O_+:= \{\,x\in\O\;\;:\;a(x)>0\,\},
    \end{equation*}
    is a subdomain of $\O$ of class $\mathcal{C}^{2+\mu}$
    with $\bar \O_+\subset \O$.
    \item[B.] The open set
    \begin{equation*}
    \O_0:=\O\setminus \bar \O_+,
    \end{equation*}
    is a subdomain of $\O$ of class $\mathcal{C}^{2+\mu}$ 
    such that
    \begin{equation*}
        K_0:= (a)^{-1}(0) = \bar \O \setminus \Omega_+,
    \end{equation*}
    is a compact set. Moreover, $\p\O_0$ consists 
    of two components, $\G_1$ and $\G_2$ and are also of
		class $\mc{C}^{2+\mu}$. 
\end{enumerate}
Figure \ref{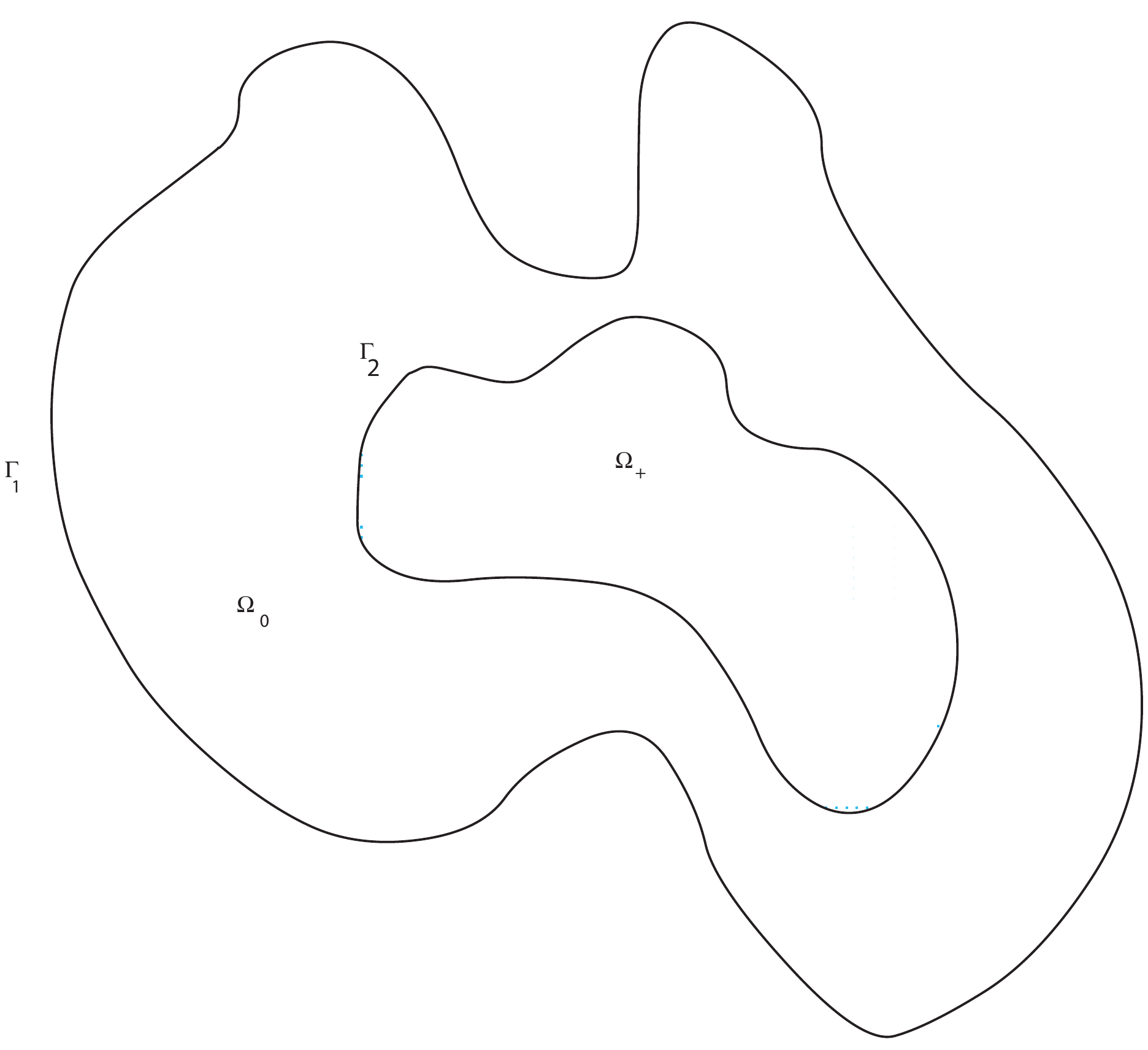} shows a typical situation where the conditions 
A and B are fulfilled.
\begin{figure}[ht]
\begin{center}
\includegraphics[height=8cm]{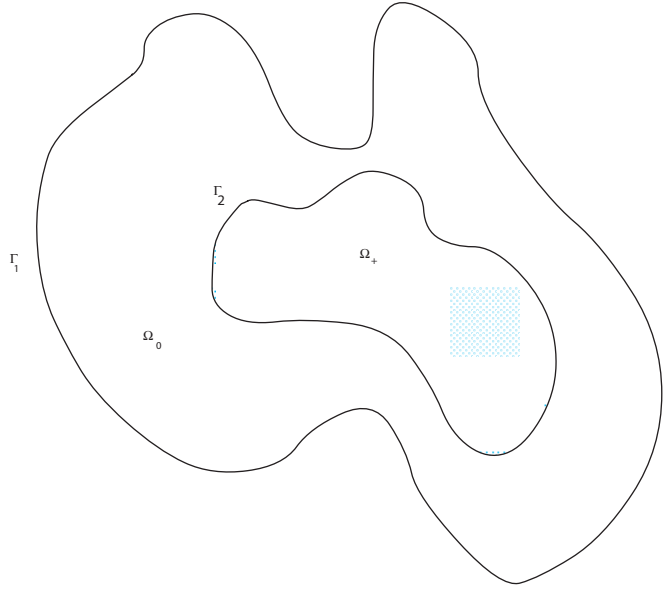}
\caption{Nodal configuration of $a(x)$.} \label{CoopVarFig-01.pdf}
\end{center}
\end{figure} 
Furthermore, for the function $f(x,u)$, we suppose the following assumptions:
\begin{enumerate}
\item[(Af)] $f \in \mathcal{C}^{\mu,1+\mu}(\bar\O\times
[0,\infty))$ satisfies
\begin{equation*}
f(x,0)=0\quad \hbox{and}\quad \p_u f(x,u)>0\quad\hbox{for
all}\quad x \in\bar\O\quad \hbox{and}\quad u>0\,.
\end{equation*}
\item[(Ag)] There exists $g \in \mathcal{C}^{1+\mu}([0,\infty))$
such that
\begin{equation*}
  g(0)=0\,, \;\; g(u)>0\;\;\hbox{and}\;\; g'(u)>0 \;\;
  \hbox{for all} \;\; u >0 \,,\quad
  \lim_{u\to\infty}g(u)=\infty\,,
\end{equation*}
where $'=\frac{d}{du}$, and
\begin{equation*}
  f(\cdot,u)\geq g(u)\quad \hbox{if}\quad u\geq 0\,.
\end{equation*}
\end{enumerate}
Note that (Af), (Ag) imply
\begin{equation*}
  \lim_{u\uparrow \infty}f(x,u)=\infty \quad \hbox{uniformly in}\quad
  x\in \bar \O\,.
\end{equation*}
Therefore, under these circumstances we are supposing that the nonlinearities 
of the two equations involved in \eqref{11} vanish in different subdomains of 
$\O$. Indeed, the nonlinearity of the v-equation 
vanishes overall $\O$ so it will be a linear equation, 
while the other is semilinear degenerated whose nonlinearity  
vanishes only in $\O_0$. Hence, this is a very novel 
situation, different from these usually analyzed in the literature. 
In particular, 
in the works of Molina-Meyer \cite{Molina1}, \cite{Molina2}, \cite{Molina3} and 
\cite{AL-NA} a class of cooperative systems, assuming a situation where 
the nonlinearities vanish in the same subdomains,  
such characterization of the 
existence of coexistence states was obtained 
in terms of the parameter $\l$.

The spatial distribution imposed throughout this paper was first analyzed 
in \cite{AL4}, but using the method of sub and supersolutions. On the contrary, here we base our analysis on 
variational methods \cite{D4,De,R2} which allow us to answer some of the open questions which arose in \cite{AL4}. 
Furthermore, the analysis carried out here might be crucial as a 
step forward in ascertaining the dynamics of more general classes of 
cooperative parabolic problems with general non-negative coefficients in front of the nonlinearities.
\par
Next, we introduce some notations. Then, for every
$V_1,V_2 \in \mathcal{C}^\mu(\bar \O)$ we denote by 
\begin{equation}
\label{12}
     \mf{L}(V_1,V_2) :=\left(\begin{array}{cc} -\D +V_1 & -\a\\
     -\b & -\D+V_2 \end{array}\right).
\end{equation} 
the \emph{strongly cooperative operator} (as
discussed in \cite{LM}, \cite{Am3}, \cite{FM}, \cite{Sw}) in the sense that
\begin{equation}
\label{13}
 	\a>0 \quad \hbox{and}\quad \b>0
 	\quad \hbox{in}\;\;\bar \O.
\end{equation} 
Therefore, 
for any smooth $D \subset \O$, there is a
unique value $\tau$ for which the linear eigenvalue problem 
\begin{equation}
\label{14}
    \left\{ \begin{array}{ll}
    \mf{L}(V_1,V_2) \binom {\varphi}{\psi} =
    \tau \binom {\varphi}{\psi} & \quad \hbox{in}\quad D, \\
    (\varphi,\psi)= (0,0) & \quad \hbox{on}\quad \p
    D,\end{array}\right.
\end{equation}
possesses a solution $(\varphi,\psi)$ with $\varphi
>0$ and $\psi >0$. Thus, we denote by $\s[\mf{L}(V_1,V_2),D]$ 
the \emph{principal eigenvalue} of $\mf{L}(V_1,V_2)$ in $D$ (under
homogeneous Dirichlet boundary conditions) and it is well known 
that $\s[\mf{L}(V_1,V_2),D]$ is \emph{simple} and
\emph{dominant}, in the sense that
\begin{equation*}
  \mathrm{Re\,} \tau > \s[\mf{L}(V_1,V_2),D],
\end{equation*}
for any other eigenvalue $\tau$ of \eqref{14}. Moreover, the
\emph{principal eigenfunction} $(\varphi,\psi)$ is unique, up to a
positive multiplicative constant, and
\begin{equation*}
  \varphi \gg 0, \qquad \psi\gg 0.
\end{equation*}
A function $w\in \mc{C}^1(\bar D)$ is said to satisfy $w\gg 0$ (in
$D$) if it lies in the interior of the cone of non-negative
functions of $\mc{C}^1(\bar D)$, i.e., if $w(x)>0$ for all $x\in D$
and $\p w /\p n (x)<0$ for all $x \in w^{-1}(0)\cap \p D$, where
$n=n(x)$ stands for the outward unit normal to $D$ at $x\in\p D$.
\par
Also, throughout this paper we set
\begin{equation}
\label{15}
  \mathfrak{L}_0:=\mathfrak{L}(0,0)\, \qquad
  \sigma_1:=\sigma[-\Delta,\Omega],
\end{equation}
and denote by $\phi_1\gg 0$ the principal eigenfunction associated
with $\sigma_1$, normalized so that, for example,
\begin{equation*}
  \max_{\bar \Omega} \phi_1=1.
\end{equation*}
Hence, performing some calculations and 
according to the properties of the principal eigenvalue we arrive at 
\begin{equation}
\label{16}
  \sigma[\mathfrak{L}_0,\Omega]= \sigma_1-\sqrt{\alpha\beta}
  \qquad \hbox{and} \qquad (\varphi,\psi)=\left( \sqrt{\alpha}\;
  \phi_1,\sqrt{\beta}\; \phi_1\right).
\end{equation}
Similarly, we also obtain
\begin{equation*}
   \sigma[\mathfrak{L}_0,\Omega_0]=\sigma_1^0-\sqrt{\alpha\beta},
\end{equation*}
where
\begin{equation*}
  \sigma_1^0:=\sigma[-\Delta,\Omega_0].
\end{equation*}

\subsection{Motivations and main results} 


\noindent Due to the cooperative character of system \eqref{11} 
we apply \cite[Lemma\;3.6]{AL-NA} which guarantees that   
$v=0$ if $u=0$ from the $u$-equation 
because $\alpha>0$ in $\bar\O$. Similarly, since $\beta>0$ in $\bar\O$, 
it follows from the
$v$-equation that $u=0$ if $v=0$. Thus, \eqref{11} admits two types
of non-negative solutions: the \emph{trivial state} $(0,0)$, and the
\emph{coexistence states}; those of the form $(u,v)$ with $u\gg 0$
and $v\gg 0$. Moreover, according to the $v$-equation, if $(u,v)$ is
a coexistence state of \eqref{11}, then
\begin{equation*}
  (-\Delta -\lambda)v = \beta u >0\qquad \hbox{in}\;\; \Omega,
\end{equation*} 
Therefore, owing to \cite[Theorem 2.1]{LM} (cf. \cite[Theorem 2.5]{Lo3}), under
homogeneous Dirichlet boundary conditions,  
the following condition must be held for the existence of 
coexistence states
\begin{equation}
\label{17}
  \lambda < \sigma_1.
\end{equation}
Then, if condition \eqref{17} is satisfied we can solve the v-equation 
in terms of u and under homogeneous Dirichlet boundary conditions 
$v=0$ on $\p\O$, i.e.,
\begin{equation*}
  v = \beta (-\Delta-\lambda)^{-1}u,
\end{equation*}
with $(-\Delta-\lambda)^{-1}$ as a positive linear integral compact 
operator from $L^2(\O)$ to itself.
Substituting that expression into the first equation of \eqref{11}, 
we obtain the following non--local problem for u:
\begin{equation}
\label{18}
  \left\{ \begin{array}{ll}
  -\Delta u  = \lambda u + \alpha\beta (-\Delta-\lambda)^{-1} u
  - a(x) f(x,u)u & \quad \hbox{in} \;\; \Omega,
  \\ u =0 & \quad \hbox{on} \;\; \partial\Omega,\end{array} \right.
\end{equation}
This integro-differential equation can be viewed as the Euler--Lagrange 
equation of the functional
\begin{equation}
\label{19}
	\mc{E}_\g (u) =\int_\O \left[\frac{1}{2}|\nabla u|^2 - 
	\frac{\l}{2}u^2- \frac{\a\b}{2} 
	\left|(-\Delta-\lambda)^{-1/2} u\right|^{2}+
	 F(x,u)\right], 
\end{equation}
where 
\begin{equation*}
  F(x,u):=\int_0^{u} a(x) f(x,\xi) \xi d\xi \quad \hbox{and}\quad \g:=\a \b.
\end{equation*}
Note that owing to the definition of $(-\Delta-\lambda)^{-1}$ 
the operator $(-\Delta-\lambda)^{-1/2}$ is defined as the square root of 
$(-\Delta-\lambda)^{-1}$ and it will be also referred to as a non-local 
linear operator. Furthermore, due to the relation with \eqref{18} the analysis carried out here might add some 
additional valuable information and new methods to the analysis 
of higher-order partial differential equations (cf. \cite{PV}). 
\par
After these transformations we are in the position to ascertain the 
characterization of the existence of coexistence states through 
the application of classical variational methods to \eqref{18}.

This decoupling technique has been previously used, and for the first time introduced in \cite{FiguMit,KlassMit}, 
to analyze several non-cooperative systems (the off-diagonal couple terms have opposite signs) such 
as the FitzHugh--Nagumo type which serve as a model for nerve conduction. In those results the authors obtained 
existence and multiplicity results as well as a spectral analysis \cite[Section 1]{FiguMit} for the linear operator
\begin{equation}
\label{opmit}
(-\Delta-\lambda) -\gamma(-\Delta-\lambda)^{-1}.
\end{equation}

It is worth mentioning that the differential operator 
involved in the system \eqref{11} and denoted by \eqref{12} is not self--adjoint 
so it is not possible to obtain its variational approach in order to ascertain such 
characterization. However, \eqref{18} does possess the associated 
variational form \eqref{19} whose critical points provide us 
with weak solutions of \eqref{18} in $H_0^1(\O)$, i.e.,
\begin{equation}
 \label{defweak11}
    \tex{\int\limits_\O \nabla u \cdot \nabla \nu -\l \int\limits_\O u\nu - \g \int\limits_\O (-\D-\l)^{-1/2} u \cdot
    (-\Delta-\l)^{-1/2} \nu  - \int\limits_\O a(x)f(x,u)u \nu =0,  }
\end{equation}
for any $\nu\in H_0^1(\O)$ (or $C_0^\infty(\O)$). 
Moreover,  
by classic elliptic regularity (Schauder's theory) those weak solutions are also classical 
solutions. 

Therefore, the existence of solutions for the  
problem \eqref{18} is ascertained through a classical variational approach and, in addition,  
under condition \eqref{17} we have that \eqref{18}
admits a positive solution if and only if \eqref{11} possesses a
coexistence state. 
\par
In order to accomplish such characterization of the existence and uniqueness 
of coexistence states we must use the spectral bound which appeared for the first time in \cite{AL4},  
and denoted by 
\begin{equation}
\label{110}
   \Sigma(\lambda) := \sup_{w \in \mathcal{P}} \inf_{\Omega_0}
  \frac{(-\Delta-\lambda)w}{(-\Delta-\lambda)^{-1}w}, \qquad
  \mathcal{P} :=\{w\in \mathcal{C}_0^2(\bar \Omega)\;:\;\; w\gg  0\}.
\end{equation} 
In this paper we shall use and introduce an equivalent variational expression defined by
\begin{equation}
\label{exva}
	\Sigma(\l):= \inf \frac{\int_{\O_0} |\nabla w|^2 -\l}
	{\int_{\O_0} \left|(-\D-\l)^{-1/2} w\right|^2},\quad \hbox{such that} \quad w\in H_0^1(\O_0) \quad \hbox{and} \quad \int_{\O} w^2 =1,
\end{equation}
more suitable to the methodology used throughout this work.

Consequently, the main result of this paper which establishes 
the existence and uniqueness of coexistence states for the problem \eqref{11} 
as well as for \eqref{18} and the limiting behaviour at the limiting values of the main continuation parameter $\g:=\a\b$ 
considered here is as follows. This result substantially improves 
\cite{AL4}.
\begin{theorem}
\label{Th 11} Suppose $\lambda<\sigma_1$. 
Then, \eqref{11} possesses a coexistence state if and only if
\begin{equation}
\label{pc}
   (\sigma_1-\lambda)^2  < \g < \Sigma(\lambda),
\end{equation}
and it is unique, if it exists, and if we denote it by
$\theta_{[\g,\O]}:=(u_\g,v_\g)$, then 
\begin{equation*}
  \lim_{\g\downarrow (\sigma_1-\lambda)^2}\theta_{[\g,\O]}=(0,0) 
  \quad \hbox{in}\;\; \mathcal{C}(\bar\O) 
  \times \mathcal{C}(\bar\O),
\end{equation*}
and
\begin{equation*}
  \lim_{\g\uparrow \Sigma(\l)}\theta_{[\g,\O]}=(\infty,\infty) \quad 
  \hbox{in}\;\;  \bar\O.
\end{equation*}
Furthermore, the map
\begin{equation*}
  \begin{array}{ccc} ((\sigma_1-\lambda)^2,\Sigma(\lambda)) & \overset{\t}\longrightarrow &
  \mc{C}(\bar \O) \times \mc{C}(\bar \O) 
  \\ \g & \mapsto & \theta(\g):=\theta_{[\g,\O]} \end{array}
\end{equation*}
is point--wise increasing and of class $\mc{C}^1$.
\end{theorem}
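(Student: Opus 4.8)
The plan is to work throughout with the reduced nonlocal scalar problem \eqref{18} and its energy \eqref{19}, using the equivalence (valid because $\l<\s_1$) between coexistence states of \eqref{11} and positive solutions of \eqref{18}. The backbone of everything is the spectral decomposition of the quadratic part of $\mc{E}_\g$. Writing $\{\phi_k,\s_k\}_{k\ge1}$ for the Dirichlet eigenpairs of $-\D$ on $\O$ (so $\s_1<\s_2\le\cdots$, $\phi_1\gg0$) and expanding $u=\sum_k c_k\phi_k$, one computes
\begin{equation*}
  \int_\O\Big[|\n u|^2-\l u^2-\g\big|(-\D-\l)^{-1/2}u\big|^2\Big]
  =\sum_{k\ge1}c_k^2\,\frac{(\s_k-\l)^2-\g}{\s_k-\l}.
\end{equation*}
Since $\l<\s_1\le\s_k$ every denominator is positive, so this form is positive definite exactly when $\g<(\s_1-\l)^2$ and acquires a single negative direction, namely $\phi_1$, as soon as $\g>(\s_1-\l)^2$. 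This dichotomy drives the argument: the lower bound in \eqref{pc} is the threshold at which $u=0$ ceases to be a local minimum, while $\g<\Sigma(\l)$ is precisely the coercivity threshold coming from the degenerate region $\O_0$, as the variational form \eqref{exva} makes manifest.

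For necessity of \eqref{pc} I would first test \eqref{18} against $\phi_1$. Using that $(-\D-\l)^{-1}$ is self-adjoint with $(-\D-\l)^{-1}\phi_1=(\s_1-\l)^{-1}\phi_1$, the nonlocal term collapses and one is left with
\begin{equation*}
  \big[(\s_1-\l)^2-\g\big]\int_\O u\,\phi_1
  =-(\s_1-\l)\int_\O a\,f(x,u)\,u\,\phi_1<0,
\end{equation*}
the right-hand side being strictly negative because $a\,f(\cdot,u)u\,\phi_1>0$ on $\O_+$; since $\int_\O u\phi_1>0$ this forces $\g>(\s_1-\l)^2$. For the upper bound I would restrict \eqref{18} to $\O_0$, where $a\equiv0$, so that $(-\D-\l)u=\g(-\D-\l)^{-1}u$ there; feeding $w=u\gg0$ into the supremum characterization \eqref{110} yields $\Sigma(\l)\ge\g$. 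The strict inequality needs a little more: since the competitors in \eqref{exva} lie in $H_0^1(\O_0)$ and must vanish on $\p\O_0=\G_1\cup\G_2$, whereas a genuine coexistence state is strictly positive across that interface, one can construct an admissible $w$ that strictly lowers the quotient, ruling out $\g=\Sigma(\l)$.

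Sufficiency I would obtain by the direct method applied to $\mc{E}_\g$ on $H_0^1(\O)$. Coercivity and boundedness below follow from $\g<\Sigma(\l)$: on functions concentrated in $\O_0$ the quadratic part is bounded below by a positive multiple of $(\Sigma(\l)-\g)$ times $\|(-\D-\l)^{-1/2}w\|^2$, while on $\O_+$ the superquadratic growth of $F$ (guaranteed by (Af), (Ag) and $\lim_{u\to\infty}f=\infty$) dominates the indefinite quadratic contribution. Weak lower semicontinuity together with the compactness of $(-\D-\l)^{-1/2}$ then delivers a minimizer $u_\g$. Because $\g>(\s_1-\l)^2$, evaluating $\mc{E}_\g$ at $t\phi_1$ for small $t>0$ gives a strictly negative value, so the minimizer is nontrivial; replacing $u_\g$ by $|u_\g|$ leaves the energy unchanged and the strong maximum principle upgrades it to $u_\g\gg0$, whence $\big(u_\g,\b(-\D-\l)^{-1}u_\g\big)$ is a coexistence state.

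The most delicate point is uniqueness, since for $\g>(\s_1-\l)^2$ the form $Q$ is indefinite and a naive energy comparison fails. I would attempt a Picone/Brezis--Oswald argument: the local reaction $\l u-a f(x,u)u$ has $\big(\l u-af(x,u)u\big)/u=\l-af(x,u)$ strictly decreasing in $u$ by $\p_u f>0$, and one tests the two equations for putative solutions $u_1,u_2\gg0$ with $(u_1^2-u_2^2)/u_1$ and $(u_2^2-u_1^2)/u_2$; the principal-part contribution is nonpositive by Picone, and strict monotonicity would force $u_1\equiv u_2$. The genuine obstacle is the nonlocal term $\g(-\D-\l)^{-1}u$: being linear and self-adjoint it cancels under an \emph{integrated} identity but has no pointwise sign, so closing the estimate seems to require exploiting the self-adjointness globally rather than termwise, which I expect to be the hardest step; a fallback is a monotone-iteration construction of minimal and maximal solutions (the operator $u\mapsto Mu+\l u+\g(-\D-\l)^{-1}u-af(x,u)u$ is order-preserving for large $M$) followed by a sweeping argument to collapse them. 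Once uniqueness holds, monotonicity of $\g\mapsto\theta(\g)$ follows by differentiating \eqref{18}: $\p_\g u_\g$ solves the linearized equation with right-hand side $(-\D-\l)^{-1}u_\g>0$, and since the linearization $-\D-\l-\g(-\D-\l)^{-1}+a(f+u\,\p_u f)$ is injective at the nondegenerate minimizer (its positivity on $\O_+$ pushes the single negative eigenvalue of $Q$ above $0$), the maximum principle gives $\p_\g u_\g\gg0$; the same invertibility feeds the implicit function theorem to yield the $\mc{C}^1$ dependence. Finally the two limits are read off from the spectral picture: as $\g\downarrow(\s_1-\l)^2$ the negative direction closes up, $\mc{E}_\g(u_\g)\uparrow0$, the $H_0^1$ bound yields $u_\g\to0$, and elliptic regularity upgrades this to $\mc{C}(\bar\O)$; as $\g\uparrow\Sigma(\l)$ the uniform coercivity constant $\Sigma(\l)-\g$ degenerates, the a priori bounds break down, and one shows $\|u_\g\|\to\infty$ with concentration on $\O_0$, the second point I expect to be delicate being the rigorous control of this blow-up.
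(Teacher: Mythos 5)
Your proposal follows the same overall skeleton as the paper (decoupling to the nonlocal problem \eqref{18}, direct method for existence under \eqref{pc}, restriction to $\O_0$ for the upper bound, IFT for monotonicity and $\mc{C}^1$ dependence), and two of your steps are actually cleaner than the paper's: testing \eqref{18} against $\phi_1$ gives the lower bound $\g>(\s_1-\l)^2$ more directly than the paper's system-eigenvalue argument, and evaluating $\mc{E}_\g$ at $t\phi_1$ is a sounder nontriviality test than the paper's evaluation at a small constant $M$ (which is not even in $H_0^1(\O)$). But there are two genuine gaps. The most serious is uniqueness, which is part of the statement and which you explicitly leave open. The paper closes it without Picone and without monotone iteration: for two positive solutions $u_1\neq u_2$ it sets $w:=u_2-u_1$ and observes that $w$ solves the linear nonlocal problem \eqref{510} with the averaged potential $V:=a\int_0^1 \p_u f(\cdot,u_t)\,u_t\,dt+a\int_0^1 f(\cdot,u_t)\,dt$, $u_t:=tu_2+(1-t)u_1$. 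The key idea you are missing is that the principal-eigenvalue machinery (existence, monotonicity in the potential, and the fact that an eigenvalue with a positive eigenfunction must be the principal one) survives the addition of the term $-\g(-\D-\l)^{-1}$ precisely because that operator is positivity preserving; this is what replaces the pointwise sign information that Picone cannot supply. Since $u_1\gg0$ is a positive eigenfunction with eigenvalue $0$ of $-\D-\l-\g(-\D-\l)^{-1}+af(\cdot,u_1)$, and $V>af(\cdot,u_1)$, monotonicity forces $\s[-\D-\l-\g(-\D-\l)^{-1}+V,\O]>0$, while $w>0$ would make this eigenvalue equal to $0$: contradiction. (The paper inherits the ordering step $w>0$ from \cite{AL-NA}, so your worry about unordered solutions is legitimate, but the mechanism above is the intended one and you do not have it.)

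The second gap is the strictness of the upper bound. Your restriction of \eqref{18} to $\O_0$, where $a\equiv 0$, correctly yields $\g\leq\Sigma(\l)$ via \eqref{110} — this is exactly the paper's step — but your proposed upgrade to $\g<\Sigma(\l)$ ("one can construct an admissible $w$ that strictly lowers the quotient") is not an argument: the coexistence state restricted to $\O_0$ is not admissible in \eqref{exva} since it does not vanish on $\p\O_0$, and nothing in your sketch produces a competitor with quotient strictly below $\g$. The paper proves strictness by a perturbation argument (Lemma \ref{Le 42}): every coexistence state is nondegenerate (again by principal-eigenvalue monotonicity applied to the linearized cooperative operator), so the Implicit Function Theorem continues it to the problem with $\a$ replaced by $\a+t$ for small $t>0$, whence $(\a+t)\b\leq\Sigma(\l)$ for all such $t$ and therefore $\a\b<\Sigma(\l)$. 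Finally, your endpoint limits are sketches rather than proofs: the paper obtains the limit as $\g\downarrow(\s_1-\l)^2$ from Crandall--Rabinowitz bifurcation (after verifying the transversality condition) combined with uniqueness, and the blow-up as $\g\uparrow\Sigma(\l)$ from a compactness-plus-continuation contradiction; your energy argument for the first limit would at least need the monotonicity of $\g\mapsto u_\g$ or nonexistence at the endpoint to conclude, and you yourself flag the blow-up control as unresolved — though, in fairness, the paper's own treatment of the blow-up profile leans on the unproven claim \eqref{311}.
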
 
The motivation for the definition of the spectral bound \eqref{110} 
comes from the fact that the nonlinearities vanish in different subdomains. 
In fact, the results shown in \cite{AL-NA} rested on the spatial assumptions considered where 
the nonlinearities vanish in the same subdomains. 
Hence, based upon a method developed in \cite{FKLM}, \cite{Lo2}, a positive supersolution was
constructed approximating the eigenfunction associated with the 
principal eigenvalue $\s[\mc{L}_0,\O_0]$ by a positive smooth extension. 
Thus, we have the characterization of coexistence states if and only if 
\begin{equation}
\label{usucon}
 \s[-\D,\O]-\sqrt{\a\b}=\s[\mf{L}_0,\O]  <  \l < \s[\mf{L}_0,\O_0]=\s[-\D,\O_0]-\sqrt{\a\b}.
\end{equation}

However, 
for the situation supposed in this paper such a construction 
is not available. Particularly, one possible justification comes from the following fact. 
Setting the spectral bound defined 
by \eqref{110} as  
\begin{equation*}
  \Sigma(\lambda) = \sup_{u\in (-\Delta-\lambda)^{-1}(\mathcal{P})}
  \inf_{\Omega_0}  \frac{(-\Delta-\lambda)^2 u}{u}.
\end{equation*}
we obtain $\Sigma(\l)$ maximizing 
\begin{equation*}
   \inf_{\Omega_0} \frac{(-\Delta-\lambda)^2 u}{u}
\end{equation*}
among all the functions $u$ of the form
\begin{equation*}
  u = (-\Delta-\lambda)^{-1}w,
\end{equation*}
for some $w \gg 0$, such that 
\begin{equation}
\label{conca}
  -\Delta u = \lambda u + w >0 \qquad \hbox{if}\;\; \lambda \geq 0,
\end{equation}
and, therefore, the functions $u_k$, $k\geq 1$, of any maximizing
sequence approximating $\Sigma(\lambda)$  must be concave.
On the other hand, it is well known that
\begin{equation*}
  (\sigma_1^0-\lambda)^2 = \sup_{u\in \mathcal{P}} \inf_{\Omega_0}
  \frac{(-\Delta-\lambda)^2 u}{u}=\sigma[(-\Delta-\lambda)^2,\Omega_0]
\end{equation*}
(see \cite[Theorem 3.1]{Lo3} and the references therein). 
Consequently, it is not possible to construct any positive smooth 
extension approximating the principal eigenfunction associated to 
$\s_1^0$ as was done in \cite{AL-NA}, \cite{FKLM}, \cite{Lo2} and 
\cite{Molina3}. So, since that approximation is not possible we can conclude that 
the next estimate should hold
\begin{equation}
\label{111}
  \Sigma(\lambda) < (\sigma_1^0-\lambda)^2.
\end{equation}
Indeed, thanks to the sharp estimations obtained for the spectral bound $\Sigma(\l)$ in \cite{AL4} 
we have that 
\begin{equation}
\label{ine}
 (\s[-\D,\O]-\l)^2  <  \Sigma(\l) \leq (\s[-\D,\O_0]-\l)^2.
\end{equation}
In addition, through the proof of those inequalities it was claimed that the profile 
where $\Sigma(\l)$ is reached does not belong to 
$\mathcal{P} :=\{w\in \mathcal{C}_0^2(\bar \Omega)\;:\;\; w\gg 0\}$. Indeed, that 
profile seems to belong to rather general classes of non-smooth functions like 
$w \in \mc{C}^2(\bar \O_0)\cup L^\infty(\O_+)$. 
Among all those functions $w$ with a fixed restriction $w|_{\O_0}$, the one
maximizing $\mf{J}(w):=\inf_{\Omega_0}
  \frac{(-\D-\lambda I)w}{(-\D-\lambda I)^{-1}w}$ is given through
\begin{equation}
\label{311}
  \tilde w := \left\{ \begin{array}{ll} w|_{\O_0} & \quad \hbox{in}
  \;\; \bar \O_0, \\ 0 & \quad \hbox{in} \;\; \O_+,\end{array}\right.
\end{equation} 
Although the exact profile where $\Sigma(\l)$ is reached still remains an open problem, it is extremely important to remark that,
in such a case, the second estimate of \eqref{ine} must be strict. We believe that the spectral properties 
for the operator \eqref{opmit} obtained in \cite[Section 1]{FiguMit} 
could help to show the path to follow in order to ascertain such a profile.

Furthermore, in this paper we obtain the limiting behaviour when the parameter $\g$ 
reaches the spectral bound $\Sigma(\l)$ after claiming that \eqref{111} is true. 
Something that is not completely proved, since it relies on \eqref{111} and, hence, on the profile \eqref{311} 
(and this is not known yet), but allows us to considerably improve 
previous results (see \cite{AL4}). 
\par
Finally, note that \eqref{18} can be regarded  as a 
\emph{non-local perturbation} of the
generalized logistic boundary value problem
\begin{equation*}
  \left\{ \begin{array}{ll}
  -\Delta u  = \lambda u - a(x) f(x,u)u & \quad \hbox{in}
  \;\; \Omega,\\ u =0 & \quad \hbox{on} \;\;
  \partial\Omega,\end{array} \right.
\end{equation*} 
by switching off to $0$ the product $\alpha\beta$ - 
the cooperative effects of \eqref{11}. According to 
\cite{FKLM}, this 
unperturbed problem possesses a positive solution if and
only if
\begin{equation*}
  \sigma_1<\lambda <\sigma_1^0.
\end{equation*}
Thus, and according to the above-mentioned discussion, the classical results of
Br\'{e}zis and Oswald \cite{BO}, T. Ouyang \cite{Ou}, and J. M.
Fraile et al. \cite{FKLM} are of a rather different nature than
those derived from this paper for the non-local problem
\eqref{18}.


\subsection{Alternative approach}
 

As an alternative to the analysis carried out in this work after performing the previously mentioned decoupling method, 
which provides us with the non-local problem \eqref{18} we might
apply standard variational arguments directly to the system \eqref{11}. To do so, one can analyze the system of the form
\begin{equation}
\label{alt1}
  \left\{ \begin{array}{ll} \begin{array}{l}
  \displaystyle{\frac{-\Delta u}{\a}=\frac{\l}{\a} u +  v - \frac{a}{\a} f(x,u) u}\\ 
  \displaystyle{\frac{-\Delta v}{\b}= u + \frac{\l}{\b} v}  \end{array}
  & \quad \hbox{in }\;\; \O,  \\ (u,v)=(0,0)
  &  \quad \hbox{on }\;\;\p \O,\\
  \end{array}\right.
\end{equation}
equivalent to system \eqref{11} after dividing the first equation of the system \eqref{11} by the parameter $\a$ and the second by the parameter $\b$. 
Thus, this system \eqref{alt1} possesses an associated functional  
\begin{equation}
\label{alt2}
	\mc{J} (u,v) =\frac{1}{2\a}\int_\O|\nabla u|^2 +  \frac{1}{2\b}\int_\O|\nabla v|^2- 
	\frac{\l}{2\a } \int_\O u^2- \frac{\l}{2\b } \int_\O v^2 - \int_\O uv +\frac{1}{\a} \int_\O 
	 F(x,u), 
\end{equation}
where 
\begin{equation*}
  F(x,u):=\int_0^{u} a(x) f(x,\xi) \xi d\xi.
\end{equation*}
Working on the functional \eqref{alt2} we can obtain similar results to those we ascertain here for the functional \eqref{19}. However, for the system \eqref{alt1} we assume 
$\l$ as the main continuation parameter. Although that is not a big issue in this case it seems to be more convenient and natural.

Moreover, note that using this alternative approach we cannot construct either a positive strict supersolution as was done in \cite{AL-NA}, 
because of the concavity shown by \eqref{conca}. However, 
using standard variational techniques one can easily deduce a similar result to Theorem\,\ref{Th 11}. In order to state such a result 
we define the Rayleigh quotient for the linear problem
\begin{equation}
\label{alt3}
      \left\{ \begin{array}{ll} \begin{array}{l}
   \frac{-\Delta u}{\a}=\frac{\s}{\a} u +  v\\ [3pt] 
  \frac{-\Delta v}{\b}= u + \frac{\s}{\b} v  \end{array}
  & \quad \hbox{in }\;\; D,  \\ (u,v)=(0,0)
  &  \quad \hbox{on }\;\;\p D,
  \end{array}\right.
\end{equation} 
as follows 
$$\s=\inf \frac{ \frac{1}{\a}\int_D |\nabla u|^2 +  \frac{1}{\b}\int_D |\nabla v|^2 - 2  \int_D uv }{ \frac{1}{\a } \int_D u^2+ \frac{1}{\b } \int_D v^2 }.$$
for a domain $D$ and representing the principal eigenvalue of the problem \eqref{alt3}. Indeed, if we assume normalized $L^2$--norms 
$$\int_D u^2=1 \quad \hbox{and}\quad \int_D v^2=1,$$ 
we have that
$$\s[\mf{L}_1,\D]:=\inf \frac{ \frac{1}{\a}\int_D |\nabla u|^2 +  \frac{1}{\b}\int_D |\nabla v|^2 - 2  \int_D uv }{ \frac{1}{\a }+ \frac{1}{\b }},$$
which corresponds to the problem
\begin{equation}
\label{alt4}
      \begin{cases}
  &  \frac{-\Delta u}{ 1+ \a/\b} - \frac{\a\b}{ \a + \b} v= \s[\mf{L}_1,D] u \\ &
  \frac{-\Delta v}{ 1+ \b/\a}-\frac{\a\b}{ \a + \b}  u=  \s[\mf{L}_1,D] v  \end{cases}
  \quad \hbox{in }\;\; D,  \qquad (u,v)=(0,0)
   \quad \hbox{on }\;\;\p D,
\end{equation} 
and the linear operator of that eigenvalue problem denoted by
 $$ \mf{L}_1 :=\left(\begin{array}{cc}  \frac{-\Delta }{ 1+ \a/\b}  & - \frac{\a\b}{ \a + \b}\\
     - \frac{\a\b}{ \a + \b} &  \frac{-\Delta }{ 1+ \b/\a} \end{array}\right).
     $$
Thus, by similar arguments as those we perform here for the functional \eqref{19} we can state the next result.
 
\begin{theorem}
\label{Th alt1}  Assume the spatial considerations $A$ and $B$ established above are satisfied.
Then, \eqref{alt1} possesses a coexistence state if and only if
\begin{equation}
\label{altpc}
   \s[\mf{L}_0,\O]  <  \l < \s[\mf{L}_1,\O_0],
\end{equation}
and it is unique, if it exists, and if we denote it by
$\theta_{[\l,\O]}:=(u_\l,v_\l)$, then 
\begin{equation*}
  \lim_{\l\downarrow \s[\mf{L}_0,\O]}\theta_{[\l,\O]}=(0,0) 
  \quad \hbox{in}\;\; \mathcal{C}(\bar\O) 
  \times \mathcal{C}(\bar\O),
\end{equation*}
and
\begin{equation*}
  \lim_{\l\uparrow \s[\mf{L}_1,\O_0]}\theta_{[\l,\O]}=(\infty,\infty) \quad 
  \hbox{in}\;\;  \bar\O.
\end{equation*}
Furthermore, the map
\begin{equation*}
  \begin{array}{ccc} (\s[\mf{L}_0,\O],\s[\mf{L}_1,\O_0]) & \overset{\t}\longrightarrow &
  \mc{C}(\bar \O) \times \mc{C}(\bar \O) 
  \\ \g & \mapsto & \theta(\l):=\theta_{[\l,\O]} \end{array}
\end{equation*}
is point--wise increasing and of class $\mc{C}^1$.
\end{theorem}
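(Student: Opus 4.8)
The plan is to mirror the variational strategy used for the decoupled problem \eqref{18} and its functional \eqref{19}, but to work directly with the joint functional $\mc{J}$ from \eqref{alt2}. The first step is to establish the \emph{equivalence} between coexistence states of \eqref{alt1} and nontrivial critical points of $\mc{J}$: a pair $(u,v)\in H_0^1(\O)\times H_0^1(\O)$ is a weak solution of \eqref{alt1} precisely when $\mc{J}'(u,v)=0$, and by the strong maximum principle together with the cooperative structure (as invoked via \cite[Lemma\;3.6]{AL-NA}) any nonnegative nontrivial critical point must satisfy $u\gg0$ and $v\gg0$, i.e.\ be a genuine coexistence state. The second step is to pin down the two endpoints of the admissible interval \eqref{altpc}. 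The lower bound $\l>\s[\mf{L}_0,\O]$ is the exact analogue of the necessary condition \eqref{17}: testing the eigenvalue problem against the principal eigenfunction $(\var,\psi)=(\sqrt{\a}\,\phi_1,\sqrt{\b}\,\phi_1)$ of $\mf{L}_0$ and using \eqref{16} shows that below this threshold the quadratic part of $\mc{J}$ is coercive/positive, forcing $(0,0)$ to be the only critical point; at the threshold the linearization degenerates. The upper bound $\l<\s[\mf{L}_1,\O_0]$ is the analogue of $\g<\Sigma(\l)$, with the Rayleigh quotient for $\mf{L}_1$ on $\O_0$ defined in \eqref{alt4} playing the role of $\Sigma(\l)$; it is exactly the value at which the degeneracy of $a$ on $\O_0$ allows the mass to escape and coercivity to fail.

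For the existence direction, once $\l$ lies strictly inside $(\s[\mf{L}_0,\O],\s[\mf{L}_1,\O_0])$ I would minimize $\mc{J}$ over the natural space $H_0^1(\O)\times H_0^1(\O)$. The key structural observation is that, by the assumptions (Af) and (Ag), the nonlinear term $F(x,u)$ grows superlinearly wherever $a(x)>0$, so on $\O_+$ it dominates the indefinite quadratic terms; meanwhile on $\O_0$ the condition $\l<\s[\mf{L}_1,\O_0]$ guarantees that the remaining quadratic form stays bounded below. I would therefore prove that $\mc{J}$ is coercive and weakly lower semicontinuous on this interval, extract a minimizing sequence, pass to a weak limit using the compactness of the embedding $H_0^1(\O)\hookrightarrow L^2(\O)$ and of $(-\D-\l)^{-1}$, and identify the limit as a nontrivial minimizer; that it is nontrivial follows because $\mc{J}$ takes strictly negative values at a suitable test pair (built from $\phi_1$ scaled small) precisely when $\l>\s[\mf{L}_0,\O]$.

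Uniqueness, monotonicity and the $\mc{C}^1$ dependence on $\l$ I would handle by the standard sub/supersolution-plus-sweeping machinery adapted to the cooperative setting, or equivalently by exploiting strict concavity-type properties of the map $u\mapsto f(x,u)u$. Concretely, if $(u_1,v_1)$ and $(u_2,v_2)$ were two coexistence states for the same $\l$, the cooperative maximum principle together with the strict monotonicity $\p_u f>0$ yields $u_1\equiv u_2$ and $v_1\equiv v_2$ after comparing the two via the operator $(-\D-\l)$; this is the same argument that underlies uniqueness for \eqref{18}. For the monotone $\mc{C}^1$ curve $\l\mapsto\theta_{[\l,\O]}$, I would apply the implicit function theorem to the map $(\l,u,v)\mapsto\mc{J}'(u,v)$, using that the linearization $\mf{L}(a\,\p_u(f u)\!-\!\l,-\l)$ at a coexistence state is invertible because its principal eigenvalue is strictly positive on the open interval; differentiating in $\l$ then gives a linear cooperative problem whose solution $(\dot u,\dot v)$ is strictly positive, proving point-wise monotone increase.

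The boundary behaviour at the two endpoints is the step I expect to be the main obstacle, exactly as flagged in the discussion surrounding \eqref{111}. At the lower endpoint $\l\downarrow\s[\mf{L}_0,\O]$ the energy of the minimizer tends to zero and an a priori $L^\infty$ bound (from elliptic regularity applied to \eqref{alt1}) forces $\theta_{[\l,\O]}\to(0,0)$ in $\mc{C}(\bar\O)\times\mc{C}(\bar\O)$; this direction is comparatively routine. The genuinely delicate limit is $\l\uparrow\s[\mf{L}_1,\O_0]$, where one must show blow-up $(\infty,\infty)$ on all of $\bar\O$. The difficulty is that the Rayleigh characterization \eqref{alt4} only controls the behaviour on $\O_0$, and transferring the blow-up from $\O_0$ to the whole domain requires the non-local/cooperative coupling to propagate unboundedness across $\O_+$ despite the stabilizing nonlinearity there. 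I would argue by contradiction: if the solutions stayed bounded along a sequence $\l_n\uparrow\s[\mf{L}_1,\O_0]$, a normalized-limit argument would produce a nontrivial nonnegative eigenfunction of $\mf{L}_1$ on $\O_0$ at the eigenvalue $\s[\mf{L}_1,\O_0]$ that is compatible with the degeneracy of $a$, contradicting the strict inequality that defines the admissible interval — this is precisely the point where the subtlety noted after \eqref{311} (the unknown extremal profile) enters, and where the proof must lean on the sharp estimates of \cite{AL4} rather than on an explicit construction.
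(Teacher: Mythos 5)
Your overall skeleton --- work directly with the functional $\mc{J}$ from \eqref{alt2}, prove coercivity and weak lower semicontinuity inside the interval, minimize, get strict positivity from the cooperative maximum principle, uniqueness from comparison of principal eigenvalues, the monotone $\mc{C}^1$ curve from the implicit function theorem, and endpoint limits from bifurcation/compactness --- is exactly the route the paper intends: it carries this out in full for Theorem \ref{Th 11} and, for Theorem \ref{Th alt1}, supplies only the coercivity adaptation (the remark following Lemma \ref{Le 51}), asserting the rest is analogous. But two of your steps have genuine gaps. The more serious one is the ``only if'' half of \eqref{altpc} at the upper endpoint. Saying that $\s[\mf{L}_1,\O_0]$ ``is exactly the value at which \ldots coercivity fails'' explains why your existence argument stops working there; it does not exclude coexistence states for $\l \geq \s[\mf{L}_1,\O_0]$, since loss of coercivity of $\mc{J}$ is perfectly compatible with $\mc{J}$ still possessing nontrivial critical points (saddle-type ones, for instance). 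In the paper's scheme the necessity of the upper bound is a separate argument: because $a\equiv 0$ on $\O_0$, a coexistence state solves the \emph{linear} system on $\O_0$, and testing it there produces a Rayleigh-quotient identity giving $\l \leq \s[\mf{L}_1,\O_0]$ (the analogue of \eqref{int1}--\eqref{43}); the strict inequality then requires the analogue of Lemma \ref{Le 42}, i.e.\ an implicit-function-theorem perturbation of the cooperative coefficient showing the coexistence state persists when $\a$ is slightly increased, which is incompatible with equality. Your proposal contains no counterpart of either step, so the characterization is only half proved.

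The second gap is in your contradiction argument for the blow-up as $\l\uparrow\s[\mf{L}_1,\O_0]$. If the solutions $(u_n,v_n)$ stay \emph{bounded}, the normalized limit does \emph{not} vanish on $\O_+$, hence cannot be an eigenfunction of $\mf{L}_1$ on $\O_0$: the vanishing \eqref{nullcon} of the limit profile on $\O_+$ is deduced (both in Lemma \ref{Le 51} and in the remark after it) precisely from $\|(u_n,v_n)\|_{L^2(\O)\times L^2(\O)}\to\infty$, which keeps $F(x,u_n)/\|(u_n,v_n)\|^2_{L^2(\O)\times L^2(\O)}$ bounded while $f(x,u_n)\to\infty$ wherever $a>0$. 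Under your boundedness hypothesis the limit is instead a solution of the full nonlinear system at the endpoint $\l=\s[\mf{L}_1,\O_0]$, nontrivial by the pointwise monotonicity of $\l\mapsto\theta_{[\l,\O]}$, and the contradiction must then come either from the necessity half discussed above or, as in Proposition \ref{Pr 61}, from using the implicit function theorem to continue the solution curve beyond the endpoint, which the characterization forbids. The normalized-profile argument plays a different role in the paper: it is invoked \emph{after} sup-norm blow-up is established, to upgrade it to blow-up on compact subsets of $\bar\O$; it cannot serve as the source of the contradiction. Your treatment of the lower endpoint (energy of the minimizer tending to zero plus elliptic regularity, in place of the paper's Crandall--Rabinowitz bifurcation argument) is a legitimate alternative and not a gap.
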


\begin{remark}
{\rm Similarly to the analysis of the functional \eqref{19} we note that analyzing the functional \eqref{alt2}, 
the upper bound for the main continuation parameter (in this case) $\l$ in the interval for the existence of positive solutions \eqref{altpc} 
is again smaller than $\s[\mf{L}_0,\O_0]$.  Indeed, problem \eqref{alt4} might be written as
$$\begin{cases}
  &  -\Delta u - \frac{\a\b+\a^2}{ \a + \b} v= \s[\mf{L}_1,D] (1+ \frac{\a}{\b})u \\ &
  -\Delta v-\frac{\a\b+\b^2}{ \a + \b}  u=  \s[\mf{L}_1,D] (1+ \frac{\b}{\a})v  \end{cases}
  \quad \hbox{in }\;\; D,  \qquad (u,v)=(0,0)
   \quad \hbox{on }\;\;\p D,
   $$
   or equivalently,
   $$\begin{cases}
  &  -\Delta u - \a v= \s[\mf{L}_1,D] (1+ \frac{\a}{\b})u \\ &
  -\Delta v-\b  u=  \s[\mf{L}_1,D] (1+ \frac{\b}{\a})v  \end{cases}
  \quad \hbox{in }\;\; D,  \qquad (u,v)=(0,0)
   \quad \hbox{on }\;\;\p D,
   $$
   for any domain $D$. 
Hence, thanks to the positivity of the cooperative terms $\a$ and $\b$ we can easily deduce that
$$\s[\mf{L}_1,\O_0]<\s[\mf{L}_0,\O_0].$$
Therefore, we again arrive at a smaller interval for the parameter than what is usually obtained in these types of problems that we denoted by \eqref{usucon}
(see \cite{AL-NA} for any further details).

Moreover, we would like to point out that when we assume $\g=\a\b$ as the main continuation parameter in Theorem\;\ref{Th 11} we obtain condition 
 \eqref{pc} for the existence of positive solutions. Equivalently, in Theorem\;\ref{Th alt1} we
arrive at condition \eqref{altpc} that, although different, both conditions seem to be equivalent since again, and strikingly, the upper bound will be smaller than the correspondent usual one \eqref{usucon}
for these types of heterogeneous problems. However, in this work we will concentrate particularly on the analysis of the functional \eqref{19}. }
\end{remark}


\subsection{Outline of the paper}


\noindent The outline of this paper is as follows. In Section 2 we collect 
some properties of the functional \eqref{19}. 
In Section 3 we ascertain the necessary conditions for the existence 
of coexistence states and in Section 4 we show the sufficient conditions 
as well as the uniqueness of coexistence states for Theorem\;\ref{Th 11}, with a general idea of the sufficient conditions for the proof of Theorem\;\ref{Th alt1} . Finally, in Section 5 
we obtain the limiting behaviour of the 
solutions when the parameter $\g$ approaches $(\s_1-\l)^2$ and $\Sigma(\l)$ 
finishing the proof of Theorem\;\ref{Th 11}.


\setcounter{equation}{0}
\section{Preliminary properties of functional $\mc{E}_\g(u)$}


\noindent For the sake of the completion, 
in this section we study some of the properties of the functional 
$\mc{E}_\g(u)$ denoted by \eqref{19}. This can be performed similarly for the functional \eqref{alt2} but here we will focus on the functional \eqref{19}.  
\par
We split the functional \eqref{19} between two in order to prove its properties. 
So, we denote it by $\mc{E}_\g(u):= \mc{E}_1(u)+\mc{E}_2(u)$ where
\begin{equation}
\label{21}	\begin{split}
	\mc{E}_1(u) & :=\frac{1}{2}\left[\int_\O |\nabla u|^2 - 
	\l\int_\O  u^2- \a\b\int_\O  
	\left|(-\Delta-\lambda)^{-1/2} u\right|^{2}\right],
	\\  
	\mc{E}_2(u) & :=\int_\O F(x,u).  
	\end{split}
\end{equation}
Once the notation is established we prove the following two 
lemmas which are well known, provide us with the regularity of  
the functionals defined by \eqref{21}. Hereafter, we are assuming that $H_0^1(\O)=
W_0^{1,2}(\O)$.

\begin{lemma}
\label{Le 21}
The functional $\mc{E}_1(u)$ is Fr\'echet differentiable 
and its Fr\'echet derivative is 
\begin{equation*}
	D_{u}\mc{E}_1(u) \nu 
	:=\int_\O \nabla u \cdot \nabla \nu - 
	\l \int_\O  u \nu - \a\b \int_\O  (-\Delta-\lambda)^{-1/2} u\cdot  
	(-\Delta-\lambda)^{-1/2} \nu,
\end{equation*}
for some $\nu \in H_0^1(\O)$.
\end{lemma}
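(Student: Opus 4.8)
The plan is to exploit the fact that $\mc{E}_1$ is a purely quadratic functional, so the claim reduces to the elementary differentiability of quadratic forms attached to bounded symmetric bilinear forms on $H_0^1(\O)$. First I would introduce the symmetric bilinear form
\[
	a(u,\nu):=\int_\O \nabla u \cdot \nabla \nu - \l \int_\O u\nu - \a\b\int_\O (-\D-\l)^{-1/2}u\cdot(-\D-\l)^{-1/2}\nu,
\]
so that $\mc{E}_1(u)=\frac12 a(u,u)$ and the candidate derivative is exactly $D_u\mc{E}_1(u)\nu=a(u,\nu)$. Symmetry of $a$ is immediate from the symmetry of each of the three integrals, the last one relying on $(-\D-\l)^{-1/2}$ being self-adjoint on $L^2(\O)$.

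Next I would verify that $a$ is continuous on $H_0^1(\O)\times H_0^1(\O)$. The Dirichlet term is bounded by $\|u\|_{H_0^1}\|\nu\|_{H_0^1}$ via Cauchy--Schwarz, and the term $\l\int_\O u\nu$ is controlled by the $L^2$ norms, hence by the $H_0^1$ norms through the Poincar\'e inequality. For the non-local term I would invoke that, under condition \ef{17}, the operator $(-\D-\l)^{-1}$ is a positive compact self-adjoint operator on $L^2(\O)$, so its square root $(-\D-\l)^{-1/2}$ is a bounded self-adjoint operator on $L^2(\O)$; consequently $\left|\int_\O (-\D-\l)^{-1/2}u\cdot(-\D-\l)^{-1/2}\nu\right|\le \|(-\D-\l)^{-1/2}\|^2\,\|u\|_{L^2}\|\nu\|_{L^2}$, again dominated by the $H_0^1$ norms. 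Altogether there is a constant $C>0$ with $|a(u,\nu)|\le C\|u\|_{H_0^1}\|\nu\|_{H_0^1}$.

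With continuity in hand the differentiability is routine. Expanding by bilinearity and symmetry,
\[
	\mc{E}_1(u+h)-\mc{E}_1(u)=\frac12 a(u+h,u+h)-\frac12 a(u,u)=a(u,h)+\frac12 a(h,h).
\]
Since for each fixed $u$ the map $h\mapsto a(u,h)$ is a bounded linear functional on $H_0^1(\O)$, it is the Fr\'echet derivative provided the remainder is sublinear; and indeed $\left|\frac12 a(h,h)\right|\le \frac{C}{2}\|h\|_{H_0^1}^2$, so $\left|\frac12 a(h,h)\right|/\|h\|_{H_0^1}\to 0$ as $\|h\|_{H_0^1}\to 0$. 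This yields that $\mc{E}_1$ is Fr\'echet differentiable with $D_u\mc{E}_1(u)\nu=a(u,\nu)$, which is the asserted formula.

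The only genuinely non-trivial point, and hence the main obstacle, is the continuity of the non-local contribution, which rests entirely on $(-\D-\l)^{-1/2}$ being a bounded self-adjoint operator on $L^2(\O)$. This is guaranteed by the spectral construction of the square root of the positive compact operator $(-\D-\l)^{-1}$ recalled after \ef{19}, valid precisely because $\l<\sigma_1$ renders $-\D-\l$ positive definite under homogeneous Dirichlet conditions; everything else is a standard computation for quadratic forms.
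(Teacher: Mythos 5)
Your proof is correct and follows essentially the same route as the paper: expand the quadratic functional, identify the cross term as the candidate derivative, and observe that the remainder (the quadratic term in the increment) is $o(\left\|\nu\right\|_{H_0^1(\O)})$; your explicit bilinear-form formulation and the continuity estimates via Cauchy--Schwarz, Poincar\'e and the boundedness of $(-\D-\l)^{-1/2}$ on $L^2(\O)$ just make this precise. In fact your remainder bound $\tfrac{C}{2}\left\|h\right\|_{H_0^1(\O)}^2$ is the correct estimate, whereas the paper's displayed bound $\left|\mc{E}_1(\nu)\right|\leq K\left\|\nu\right\|_{H_0^1(\O)}$ should read as a quadratic bound (a merely linear bound would not be $o(\left\|\nu\right\|_{H_0^1(\O)})$), so your write-up repairs that slip.
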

\begin{proof}
Let $\mc{E}_1(u+\nu)$ be 
\begin{equation*}
	\mc{E}_1(u+\nu) 
	:=\frac{1}{2}[\int_\O |\nabla u+\nu|^2 
	- \l \int_\O  (u+\nu)^2 - \a\b \int_\O  
	\left|(-\Delta-\lambda)^{-1/2} (u+\nu)\right|^2].
\end{equation*}
Subsequently, operating those expressions and rearranging terms yields
\begin{align*}
	\mc{E}_1(u+\nu) &   
	:=\mc{E}_1(u)+ \mc{E}_1(\nu) + 
	\int_\O \nabla u\cdot \nabla \nu 
	- \l \int_\O  u \nu 
	\\ & - \a\b \int_\O  (-\Delta-\lambda)^{-1/2} u\cdot
	(-\Delta-\lambda)^{-1/2} \nu
\end{align*}
Since, $\mc{E}_1(\nu)$ vanishes quite radically 
\begin{equation*}
	\left|\mc{E}_1(\nu)\right| \leq K 
	\left\|\nu\right\|_{H_0^1(\O)}=
	o(\left\|\nu\right\|_{H_0^1(\O)}),	
\end{equation*}
as $\left\|\nu\right\|_{H_0^1(\O)}$ 
goes to zero and for some positive constant K, then
\begin{align*}
	|\mc{E}_1(u+\nu) -\mc{E}_1(\nu) & 
	- \int_\O \nabla u\cdot \nabla \nu + 
		\l \int_\O  u \nu \\ & - \a\b \int_\O  (-\Delta-\lambda)^{-1/2} u
	\cdot (-\Delta-\lambda)^{-1/2} \nu| 
	=	o(\left\|\nu\right\|_{H_0^1(\O)}),
\end{align*}
as $\nu \rightarrow 0$ in 
$H_0^1(\O)$. This completes the proof. 
\end{proof}
\begin{lemma}
\label{Le 22}
The functional $\mc{E}_2(u,v)$ is Fr\'echet differentiable 
and its Fr\'echet derivative is 
$$
	D_{u} \mc{E}_2(u)=\int_{\O} a(x) f(x,u) u \nu, 
$$
for some $\nu \in H_0^1(\O)$.
\end{lemma}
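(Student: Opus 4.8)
The plan is to show that the functional $\mc{E}_2(u)=\int_\O F(x,u)$ is Fr\'echet differentiable with the claimed derivative by the standard two-step argument: first verify that the G\^ateaux derivative exists and equals the stated integral, then verify that this G\^ateaux derivative is continuous in $u$, which upgrades G\^ateaux to Fr\'echet differentiability. First I would recall that $F(x,u)=\int_0^u a(x)f(x,\xi)\xi\,d\xi$, so that by the fundamental theorem of calculus $\p_u F(x,u)=a(x)f(x,u)u$; since $a\in\mc{C}^\mu(\bar\O)$ is bounded and $f\in\mc{C}^{\mu,1+\mu}$, the Nemytskii (superposition) operator $u\mapsto a(x)f(x,u)u$ is well-defined and maps $H_0^1(\O)$ into its dual. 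The key growth control comes from the Sobolev embedding $H_0^1(\O)\hookrightarrow L^p(\O)$ together with the hypotheses on $f$, which bound $a(x)f(x,u)u$ by a suitable power of $u$.

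Second I would compute the directional derivative explicitly. Writing
\begin{equation*}
  \frac{\mc{E}_2(u+s\nu)-\mc{E}_2(u)}{s}
  =\int_\O \frac{F(x,u+s\nu)-F(x,u)}{s},
\end{equation*}
I would apply the mean value theorem pointwise to the integrand, obtaining $a(x)f(x,u+\theta s\nu)(u+\theta s\nu)\nu$ for some $\theta=\theta(x)\in(0,1)$, and then pass to the limit $s\to 0$ under the integral sign. The interchange of limit and integral is justified by dominated convergence: the integrand is dominated, uniformly for small $s$, by an $L^1$ function built from the Sobolev-integrable powers of $|u|$ and $|\nu|$, using (Af) and (Ag) to bound $f$ on the relevant range. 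This yields
\begin{equation*}
  \lim_{s\to 0}\frac{\mc{E}_2(u+s\nu)-\mc{E}_2(u)}{s}
  =\int_\O a(x)f(x,u)u\,\nu,
\end{equation*}
which is the G\^ateaux derivative and matches the claimed $D_u\mc{E}_2(u)$.

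Finally I would show that the map $u\mapsto D_u\mc{E}_2(u)$ is continuous from $H_0^1(\O)$ into $(H_0^1(\O))^*$, which is precisely the continuity of the associated Nemytskii operator into $L^q$ with $q$ conjugate to the Sobolev exponent. Here the regularity $f\in\mc{C}^{\mu,1+\mu}$ gives local Lipschitz control of $\xi\mapsto f(x,\xi)\xi$, and a standard Krasnoselskii-type argument (continuity of Nemytskii operators under subcritical growth) finishes the point. Since a G\^ateaux derivative that is continuous in the base point is automatically Fr\'echet, this completes the proof.

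I expect the main obstacle to be the growth/integrability bookkeeping in the dominated-convergence step and in the continuity of the Nemytskii operator, since one must verify that $a(x)f(x,u)u$ has strictly subcritical growth in $u$ so that it lies in the dual of $H_0^1(\O)$ for every admissible $N$. The hypotheses as stated give monotonicity and sign conditions on $f$ but not an explicit polynomial growth bound, so one either restricts to the range of $N$ where the embedding is generous, or invokes the a priori boundedness of the relevant solutions; the careful reader should note that this integrability is where the real content of an otherwise routine verification lies. The algebraic expansion of $\mc{E}_2(u+\nu)$ and the pointwise mean value estimate are entirely standard and I would not dwell on them.
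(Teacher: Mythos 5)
Your proof is correct in outline but takes a genuinely different route from the paper. The paper verifies the Fr\'echet definition directly: it expands $F(x,u+\nu)$ pointwise by Taylor's theorem at $\nu=0$, asserts the pointwise estimate $|F(x,u+\nu)-F(x,u)-F_u(x,u)\nu|\le\e|\nu|$ for $|\nu|\le\delta(\e,x)$, and then integrates to conclude that the remainder is $o(\|\nu\|_{H_0^1(\O)})$. You instead follow the classical two-step path: first the G\^ateaux derivative via the mean value theorem and dominated convergence, then continuity of the Nemytskii map $u\mapsto a(\cdot)f(\cdot,u)u$ from $H_0^1(\O)$ into the dual, which upgrades G\^ateaux to Fr\'echet. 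Your route buys rigor exactly where the paper's argument is thinnest: since the paper's $\delta$ depends on $x$, its pointwise estimate does not by itself yield a uniform integral bound, and some dominated-convergence or growth argument of the kind you describe is in fact needed to close that step. Conversely, the paper's route would be shorter if the required uniformity were available, e.g.\ under an explicit subcritical growth bound on $f$. The integrability concern you flag at the end --- that (Af) and (Ag) give monotonicity but no polynomial growth bound, so $a(\cdot)f(\cdot,u)u$ need not lie in the dual of $H_0^1(\O)$ for arbitrary $u\in H_0^1(\O)$ when $N\ge 3$ --- is a genuine issue, but it is a shared one: it afflicts the paper's proof (and indeed the well-definedness of $\mc{E}_2$ on all of $H_0^1(\O)$) just as much as yours, and your explicit acknowledgment of it, together with the standard remedies you propose, is a point in favor of your write-up rather than a defect relative to the paper.
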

\begin{proof}
We know that $\mc{E}_2(u) :=\int_\O F(x,u)$ with 
$F(x,u):=\int_0^{u} a(x) f(x,\xi) \xi d\xi$. To get the expression 
of $F(x,u+\nu)$ we use Taylor's expansion in $\nu=0$. Thus,
$$
	F(x,u+\nu)=F(x,u)+F_{u}(x,u)\nu+ o(\left|\nu\right|),
$$
as $\nu\rightarrow 0$. 
Hence, for every $\e>0$ there exists 
$\d=\d(\e,x)$ such that 
$$
	\left|F(x,u+\nu)-F(x,u)-F_{u}(x,u)\nu\right| \leq \e \left|\nu\right|,
$$
for $\left|\nu\right| \leq \d$.
Therefore, since $u\in H_0^1(\O)$ 
we can conclude that
$$
	\left|\mc{E}_2(u+\nu)-\mc{E}_2(u)-D_{u}\mc{E}_2(u)\nu\right|
	= o(\left\|\nu\right\|_{H_0^1(\O)}),
$$
when $\nu$ goes to zero in $H_0^1(\O)$, which concludes the 
proof.    
\end{proof}

\vspace{0.4cm}

Consequently, we have the directional derivative (Gateaux's derivative) 
of the functional \eqref{19} as follows  
\begin{equation}
\label{24}
	\frac{d}{dt} \mc{E}_\g(u+t\nu)_{|t=0}= \left\langle \nu, D_u \mc{E}_\g(u)\right\rangle
	=	D_{u}\mc{E}_\g(u) \nu.
\end{equation}
Furthermore, due to \eqref{24} the critical points of \eqref{19} are 
weak solutions in $H_0^1(\O)$ for  
equation \eqref{18}. In other words, the Fr\'echet derivative 
obtained in Lemmas\;\ref{Le 21} and \ref{Le 22} of the 
functional \eqref{19} is going to be zero when $u$ is a weak 
solution of \eqref{18}, i.e., 
\begin{equation}
\label{25}
	D_{u}\mc{E}_\g(u) \nu=0.
\end{equation}
Hence, $u\in H_0^1(\O)$ is 
a critical point of $\mc{E}_\g(u)$ if \eqref{25} holds, 
otherwise $u$ will be called a 
regular point. The value $M\in \re$ for which  
there exists a critical point $u_0$ such that 
$\mc{E}_\g(u_0)=M$ is said to be a critical value. 
Moreover, we say that $u_0\in H_0^1(\O)$  
is a global minimum for $\mc{E}_\g$ if for every 
$u\in H_0^1(\O)$ we have that $\mc{E}_\g(u)\geq \mc{E}_\g(u_0)$.
If we consider a subset of $H_0^1(\O)$ that minimum 
is supposed to be relative. We denote the critical points of the functional $\mc{E}_\g(u)$ \eqref{19} by 
$$
 \mc{C}_\g:=\{u \in H_0^1(\O)\,:\,
    D_{u}\mc{E}_\g(u) \nu=0\}.
    $$
Thus,
$u\in \mc{C}_\g$ if and only if
$$
    \tex{\int\limits_\O |\nabla u|^2 -\l  \int\limits_\O u^2- \g \int\limits_\O \left|(-\D-\l)^{-1/2} u\right|^2 + \int\limits_\O F(x,u)=0.  }
$$

\par
The following definitions will be of extreme 
importance to get the existence of solutions for 
equation \eqref{18}. 
\begin{definition}
\label{De 23}
{\rm The map $\mc{E}:V\longrightarrow \re$, where $V$ is a Banach 
space, is weakly (sequentially) lower semicontinuous (wls) 
if for any weakly convergent sequence 
$\{u_n\}$ in $V$, $u_n \rightharpoonup u$, 
as $n\rightarrow \infty$, then
$$
	\mc{E}(u)\leq \liminf_{n\rightarrow \infty}\mc{E}(u_n)
$$
}
\end{definition}
\begin{definition}
\label{De 24}
{\rm The map $\mc{E}:V\longrightarrow \re$, where $V$ is a Banach 
space, is weakly semicontinuous (ws) 
if for any weakly convergent sequence 
$\{u_n\}$ in $V$, $u_n \rightharpoonup u$, 
as $n\rightarrow \infty$, then
$$
	\mc{E}(u)= \lim_{n\rightarrow \infty}\mc{E}(u_n)
$$
}
\end{definition}
Subsequently, after establishing the definitions for lower semicontinuity 
we easily prove that the functional $\mc{E}_\g$ denoted by \eqref{19} is (wls). 
The following result provides us with the weakly lower semicontinuity of the 
first two terms of the functional $\mc{E}_1$.
\begin{lemma}
\label{Le 25}
If X is a Hilbert space then its norm is (wls).
\end{lemma}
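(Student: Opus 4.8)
The statement to prove is Lemma~\ref{Le 25}: if $X$ is a Hilbert space, its norm is weakly (sequentially) lower semicontinuous. The plan is to exploit the characterization of the norm via the inner product together with the defining property of weak convergence, namely that $\langle u_n, w\rangle \to \langle u, w\rangle$ for every fixed $w\in X$ whenever $u_n \rightharpoonup u$. I would organize the argument around the Cauchy--Schwarz inequality, which bounds the inner product $\langle u, u_n\rangle$ by $\|u\|\,\|u_n\|$ and thereby converts a statement about the limit of inner products into one about the $\liminf$ of norms.

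\textbf{Main steps.} First I would fix a weakly convergent sequence $u_n \rightharpoonup u$ in $X$ and record that, by the definition of weak convergence applied with the fixed test element $w=u$, we have $\langle u_n, u\rangle \to \langle u, u\rangle = \|u\|^2$. Second, for each $n$ I would apply Cauchy--Schwarz to obtain $\langle u_n, u\rangle \le \|u_n\|\,\|u\|$. Taking the $\liminf$ on both sides, the left-hand side converges (so its $\liminf$ is simply $\|u\|^2$) while the right-hand side gives $\|u\|\,\liminf_{n\to\infty}\|u_n\|$; this yields
\begin{equation*}
	\|u\|^2 \le \|u\|\,\liminf_{n\to\infty}\|u_n\|.
\end{equation*}
Third, I would divide by $\|u\|$ to conclude $\|u\| \le \liminf_{n\to\infty}\|u_n\|$, which is exactly the weak lower semicontinuity of the norm. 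I would dispose of the degenerate case $\|u\|=0$ separately, since there the inequality $0\le \liminf_{n\to\infty}\|u_n\|$ is automatic because norms are non-negative.

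\textbf{Where the subtlety lies.} This result is genuinely elementary, so there is no deep obstacle; the only point requiring a moment of care is that weak convergence by itself does not give norm convergence (indeed the norm can strictly drop in the limit, as orthonormal sequences show), which is precisely why one obtains an inequality rather than an equality and why the statement is lower semicontinuity rather than continuity. I would make sure the chain of inequalities preserves the direction when passing to the $\liminf$, and would note that the convergence of $\langle u_n,u\rangle$ means the left-hand $\liminf$ is an honest limit, so no loss occurs there. Once this lemma is in hand, it combines with the continuity of the remaining lower-order terms to deliver the weak lower semicontinuity of the full functional $\mc{E}_\g$ needed for the direct method.
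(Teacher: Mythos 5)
Your proof is correct and takes essentially the same route as the paper's: both arguments rest on testing the weak convergence against the limit element itself, so that $\langle u_n, u\rangle \to \|u\|^2$, and then converting this into a bound on $\liminf_{n\to\infty}\|u_n\|$. The only cosmetic difference is the elementary inequality used at the final step — you invoke Cauchy--Schwarz, $\langle u_n,u\rangle \le \|u_n\|\,\|u\|$, and therefore must treat the degenerate case $\|u\|=0$ separately, whereas the paper expands $0 \le \|u_n-u\|^2$ to obtain $2\langle u_n,u\rangle - \|u\|^2 \le \|u_n\|^2$ and then takes square roots, which avoids any division.
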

\begin{proof}
Since the square root function is a continuous function we find that 
\begin{equation*}
	\left\|w\right\|_{X}^2 \leq \liminf 
	\left\|w_n\right\|_{X}^2 \quad \Rightarrow \quad  
	\left\|w\right\|_{X} \leq \liminf 
	\left\|w_n\right\|_{X}, 
\end{equation*}
for any sequence $\{w_n\}$ in the space X convergent to $w\in X$.
Thus, first we assume that $w_n \rightharpoonup w$ in X and by definition    
we also have that 
\begin{equation*}
	0\leq\left\|w_n-w\right\|_{X}^2
	=\left\|w_n\right\|_{X}^2-
	2\left\langle w_n,w\right\rangle_{X}
	+\left\|w\right\|_{X}^2,
\end{equation*}
where, $\left\langle \cdot,\cdot\right\rangle_{X}$ represents 
the inner product of the Hilbert space X. Hence,
\begin{equation}
\label{26}
	2\left\langle w_n,w\right\rangle_{X}
	-\left\|w\right\|_{X}^2 
	\leq \left\|w_n\right\|_{X}^2.
\end{equation}
Moreover, owing to the convergence of the taken sequence, we can 
choose a subsequence of $\left\|w_n\right\|_{X}^2$, 
convergent to $\liminf \left\|w_n\right\|_{X}^2$. 
Therefore, passing to the limit \eqref{26} we find that
\begin{equation*}
	\left\|w\right\|_{X}^2 \leq \liminf 
	\left\|w_n\right\|_{X}^2,
\end{equation*}	
which concludes the proof. 
\end{proof}

\vspace{0.2cm}

Thus, assume $w\in H_0^1(\O)$. Consequently, applying Lemma\;\ref{Le 25} 
\begin{equation*}
	\int_\O (\left|\nabla w\right|^2-\l w^2),
\end{equation*}	
is (wls).
Since the Banach space $H_0^1(\O)$ is the 
closure of $\mc{C}_0^{\infty}(\O)$ with respect to the 
norm 
\begin{equation*}
	\left\|w_n\right\|_{H_0^1(\O)}:=\left(\int_\O 
	\left|\nabla w\right|^2+ w^2\right)^{\frac{1}{2}},
\end{equation*}
thanks to Poincar\'e's inequality, there is 
a constant $K>0$ such that 
\begin{equation*}
	K \int_\O w^2
	\leq \int_\O \left|\nabla w\right|^2,
\end{equation*}
for every $w\in H_0^1(\O)$. Hence, we can take as norm in 
$H_0^1(\O)$ the following 
\begin{equation}
\label{27}
	\left\|w\right\|_{H_0^1(\O)}:=\left(\int_\O 
	\left|\nabla w\right|^2\right)^{\frac{1}{2}},
\end{equation}
In fact, the constant K might be the principal eigenvalue $K=\s_1$,  
for $-\D$ in $\O$ under homogeneous Dirichlet boundary conditions, and  
denoted by \eqref{15} (the smallest possible one). So, after those assumptions 
and applying Lemma\;\ref{Le 25} to $H_0^1(\O)$ with the norm obtained above 
and to $L^2(\O)$ with the standard norm  
we find that the first two terms of the functional $\mc{E}_1$ are 
(wls).  
\par
Furthermore, the third term of $\mc{E}_1$ and the functional $\mc{E}_2$ are weakly semicontinuous.
\begin{lemma}
\label{Le 27}
Suppose $u\in H_0^1(\O)$. Then, $\int_\O  
	\left|(-\Delta-\lambda)^{-1/2} u\right|^{2}$ is (ws).
\end{lemma}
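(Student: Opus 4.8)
The plan is to exploit the compactness built into the operator $(-\D-\l)^{-1/2}$ together with the compactness of the Sobolev embedding. Write $T:=(-\D-\l)^{-1/2}$, so that the functional in question is $u\mapsto \int_\O |Tu|^2=\|Tu\|_{L^2(\O)}^2$. Since $\l<\s_1$, the operator $(-\D-\l)^{-1}$ is a positive, self--adjoint, compact operator on $L^2(\O)$, as recalled in the Introduction; by the spectral theorem its square root $T$ is therefore a bounded (in fact compact) self--adjoint operator on $L^2(\O)$. The key observation is that a bounded operator maps strongly convergent sequences to strongly convergent sequences, and strong convergence of $Tu_n$ forces convergence of the norms $\|Tu_n\|_{L^2(\O)}$, which is exactly the equality required in Definition \ref{De 24}.

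Concretely, I would proceed as follows. First take a sequence $u_n \rightharpoonup u$ weakly in $H_0^1(\O)$. By the Rellich--Kondrachov theorem the embedding $H_0^1(\O)\hookrightarrow L^2(\O)$ is compact, so the weak convergence in $H_0^1(\O)$ upgrades to strong convergence $u_n\to u$ in $L^2(\O)$. Next, since $T$ is bounded on $L^2(\O)$ we obtain $\|Tu_n-Tu\|_{L^2(\O)}\le \|T\|\,\|u_n-u\|_{L^2(\O)}\to 0$, hence $Tu_n\to Tu$ strongly in $L^2(\O)$. Finally, strong convergence gives $\|Tu_n\|_{L^2(\O)}\to\|Tu\|_{L^2(\O)}$, that is,
\begin{equation*}
\int_\O \left|(-\D-\l)^{-1/2} u_n\right|^2 \longrightarrow \int_\O \left|(-\D-\l)^{-1/2} u\right|^2,
\end{equation*}
which is precisely the (ws) property. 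An equivalent route, avoiding the square root altogether, uses self--adjointness to write $\int_\O|Tu_n|^2=\langle (-\D-\l)^{-1}u_n,u_n\rangle_{L^2(\O)}$: since $u_n\rightharpoonup u$ weakly in $L^2(\O)$ and $(-\D-\l)^{-1}$ is compact, one has $(-\D-\l)^{-1}u_n\to (-\D-\l)^{-1}u$ strongly, and pairing a strongly convergent sequence with a weakly convergent one passes to the limit.

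There is essentially no serious obstacle here, as the statement is a soft--analysis consequence of compactness. The only point that must be handled with minimal care is the justification that $T$ inherits boundedness (and compactness) from $(-\D-\l)^{-1}$, which follows at once from the spectral decomposition of the positive compact operator $(-\D-\l)^{-1}$ valid for $\l<\s_1$; after that, the argument reduces to the standard fact that compact operators (or the compact embedding $H_0^1(\O)\hookrightarrow L^2(\O)$) turn weak convergence into strong convergence.
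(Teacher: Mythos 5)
Your proof is correct and follows essentially the same route as the paper's: weak convergence in $H_0^1(\O)$ is upgraded to strong convergence in $L^2(\O)$ by the compact embedding, and then the boundedness (indeed compactness) of $(-\D-\l)^{-1/2}$ gives strong convergence of the images and hence convergence of the norms. If anything, your version is tighter: the paper's detour through ``equicontinuity in $H_0^{1/2}(\O)$'' and the Ascoli--Arzel\`a theorem, together with its passage to a subsequence, is replaced by a clean application of Rellich--Kondrachov and the spectral theorem, applied to the full sequence as Definition~\ref{De 24} requires.
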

\begin{proof} 
As performed in the proof of Lemma\;\ref{Le 25} we take a convergent sequence 
$\{u_n\}$ in $H_0^1(\O)$ such that $u_n \rightharpoonup u$ for some $u\in H_0^1(\O)$. 
Then, 
$(-\D-\l)^{-1/2} u_n :=f_n$, with $f_n \in H_0^{1/2}(\O)$ 
equicontinuous in $H_0^{1/2}(\O)$. Then, by the compact imbedding of 
$H_0^{1/2}(\O)$ in 
$L^2(\O)$ and the Ascoli--Arzel\'a theorem 
we can extract a convergent subsequence in $L^2(\O)$ $\{u_{m_i}\}$ 
such that $u_{m_i}\rightarrow u$, as $m_i \rightarrow\infty$. Moreover, since the 
linear operator $(-\D-\l)^{-1/2}$ is compact we find that 
\begin{align*}
	u_{m_i}\rightarrow u & 
	\Rightarrow (-\Delta-\lambda)^{-1/2} u_{m_i} \rightarrow 
	(-\Delta-\lambda)^{-1/2} u \\  & \Rightarrow
	\int_\O \left|(-\Delta-\lambda)^{-1/2} u\right|^{2}
	\rightarrow \int_\O \left|(-\Delta-\lambda)^{-1/2} u\right|^{2}.
\end{align*}
This concludes the proof.
\end{proof}

\vspace{0.2cm}

\begin{lemma}
\label{Le 26}
Suppose $u\in H_0^1(\O)$. Then, $\int_\O F(x,u)$ 
is (ws).
\end{lemma}
\begin{proof}
By Fatou's Lemma and the continuity of the Nemytskii
operator $F(x,u)$ it is possible to find a convergent
subsequence $\{u_{n_i}\}$ such that 
$$\tex{F(x,u)\leq \liminf_{n_i \rightarrow \infty}
F(x,u_{n_i}),} \quad \mbox{and}$$  $$ \tex{\int\limits_\O
F(x,u) \leq \liminf_{n_{i} \rightarrow \infty}  \int\limits_\O
F(x,u_{n_i}).
 }
 $$
Therefore, $\mc{E}_2(u_{n_i}) \rightarrow \mc{E}_2(u)$ as $n_i \rightarrow\infty$, 
in $L^{\infty}(\O)$ which concludes the proof.
\end{proof}

\vspace{0.4cm}

\setcounter{equation}{0}
\section{Necessary conditions for the existence}

\noindent In this section we prove the necessary
conditions for the existence of a coexistence state. In other words, 
it provides us with the first part of the proof of Theorem\;\ref{Th 11}.

\begin{proposition}
\label{Pr 41} Suppose $a>0$, $f$ satisfies {\rm (Af)}, {\rm (Ag)}, and the
problem \eqref{11} possesses a solution $(u,v)>(0,0)$. Then, $u\gg
0$, $v\gg 0$, and
\begin{equation}
\label{41}
  0< \sigma_1-\lambda < \sqrt{\alpha\beta}.
\end{equation}
If, in addition, $a(x)$ satisfies {\rm (A) and \rm (B)} (established in the 
introduction), then
\begin{equation}
\label{42}
  0< \sigma_1-\lambda < \sqrt{\alpha\beta} < \sqrt{\Sigma(\lambda)},
\end{equation}
where $\Sigma(\lambda)$ is the spectral bound defined by \eqref{exva}.
\end{proposition}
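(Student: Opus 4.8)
The plan is to treat the two conclusions separately, since \eqref{41} needs only (Af), (Ag), whereas \eqref{42} brings in the geometry (A), (B) through the spectral quantity $\Sigma(\l)$. For the positivity and the lower bound I would argue by the strong maximum principle. As $(u,v)>(0,0)$ is nontrivial, \cite[Lemma\;3.6]{AL-NA} (already invoked in the introduction) forces both components to be nonzero, and the strong maximum principle for the cooperative system \eqref{11} then places both in the interior of the positive cone, i.e.\ $u\gg 0$ and $v\gg 0$. The bound $0<\s_1-\l$ is then immediate from the $v$-equation alone: $v\gg 0$ is a positive strict supersolution of $(-\D-\l)v=\b u>0$ in $\O$, so the validity of the maximum principle for $-\D-\l$ forces $\s[-\D-\l,\O]=\s_1-\l>0$ by \cite[Theorem 2.1]{LM}, which is \eqref{17}.

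For the remaining inequality $\s_1-\l<\sqrt{\a\b}$ in \eqref{41} I would read the system as an eigenvalue problem for the strongly cooperative operator \eqref{12}. A direct computation shows that a coexistence state is precisely a positive eigenfunction, $\mf{L}(af(\cdot,u),0)\binom{u}{v}=\l\binom{u}{v}$. Since $(u,v)\gg(0,0)$ and $\mf{L}$ is strongly cooperative, only the principal eigenfunction can be componentwise positive, whence $\l=\s[\mf{L}(af(\cdot,u),0),\O]$. Now $af(\cdot,u)\ge 0$ with $af(\cdot,u)\not\equiv 0$ (because $a>0$ on $\O_+$ and $f(\cdot,u)>0$ where $u\gg 0$), so the strict monotonicity of the principal eigenvalue with respect to the potential yields $\l=\s[\mf{L}(af(\cdot,u),0),\O]>\s[\mf{L}_0,\O]=\s_1-\sqrt{\a\b}$ by \eqref{16}. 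This is exactly $\s_1-\l<\sqrt{\a\b}$, closing \eqref{41}.

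For \eqref{42} I would exploit the non-local reformulation \eqref{18}. On $\O_0$ we have $a\equiv 0$, so there $u$ solves $(-\D-\l)u=\g(-\D-\l)^{-1}u$, i.e.\ the pointwise ratio $\frac{(-\D-\l)u}{(-\D-\l)^{-1}u}$ equals $\g$ throughout $\O_0$, with denominator $(-\D-\l)^{-1}u=v/\b\gg 0$. As $u$ is a classical solution with $u\gg 0$, it belongs to $\mc{P}$, so testing the supremum in \eqref{110} against $w=u$ gives $\Sigma(\l)\ge\inf_{\O_0}\frac{(-\D-\l)u}{(-\D-\l)^{-1}u}=\g$. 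To upgrade this to the strict inequality $\g<\Sigma(\l)$ I would observe that on $\O_+$ the genuine equation reads $(-\D-\l)u=\g(-\D-\l)^{-1}u-af(\cdot,u)u<\g(-\D-\l)^{-1}u$, so $u$ is strictly suboptimal there: replacing $u$ by the profile $\tilde u$ of \eqref{311} (equal to $u$ on $\O_0$, zero on $\O_+$) leaves the numerator $(-\D-\l)\tilde u=\g(-\D-\l)^{-1}u$ unchanged on $\O_0$, while the positivity-improving property of $(-\D-\l)^{-1}$ makes $(-\D-\l)^{-1}\tilde u$ strictly smaller than $(-\D-\l)^{-1}u$ throughout the compact set $\bar\O_0$; hence $\mf{J}(\tilde u)>\g$. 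Passing from $\tilde u$ to $\mc{P}$ (or directly to the variational form \eqref{exva}) then gives $\Sigma(\l)\ge\mf{J}(\tilde u)>\g$, that is $\sqrt{\a\b}<\sqrt{\Sigma(\l)}$.

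The hard part is exactly this last strict inequality: the elementary test-function argument only delivers $\g\le\Sigma(\l)$, and promoting it to strictness rests on the non-attainment of the supremum within $\mc{P}$ and on the behaviour of the maximizing profile \eqref{311}, the very point the introduction flags as resting on \eqref{111}. I would therefore either invoke the sharp estimates for $\Sigma(\l)$ from \cite{AL4}, or justify the comparison above rigorously through the variational characterization \eqref{exva}, comparing the Rayleigh quotient of the extension by zero of $u|_{\O_0}$ with that of $u$ itself, so that the strictness is reduced to the strong positivity of $(-\D-\l)^{-1}(u-\tilde u)$ on $\bar\O_0$.
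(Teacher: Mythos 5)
Your treatment of \eqref{41} and of the non-strict bound $\g\leq\Sigma(\l)$ is sound and essentially coincides with the paper's: the identification $\l=\s[\mf{L}(af(\cdot,u),0),\O]$ plus monotonicity of the principal eigenvalue gives $\s_1-\l<\sqrt{\a\b}$, the $v$-equation and the maximum principle give $\s_1-\l>0$, and testing \eqref{110} with $w=u$ (which does lie in $\mc{P}$), using that $(-\D-\l)u=\g(-\D-\l)^{-1}u$ on $\O_0$, yields $\g\leq\Sigma(\l)$; this is the paper's \eqref{43}, which it derives instead from the integral quotient \eqref{exva} over $\O_0$.

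The genuine gap is exactly where you flag it, and neither of your proposed repairs closes it. The profile $\tilde u$ (extension of $u|_{\O_0}$ by zero) is inadmissible in \emph{both} characterizations of $\Sigma(\l)$: since $u\gg 0$ in $\O$ and $\p\O_+\subset\O$, we have $u>0$ on $\p\O_+$, so $\tilde u$ is discontinuous across $\p\O_+$ and hence not in $\mc{P}\subset \mc{C}_0^2(\bar\O)$; for the same reason $u|_{\O_0}$ has nonzero trace on $\G_2$ and is not in $H_0^1(\O_0)$, so it is not admissible in \eqref{exva} either. Worse, \eqref{exva} is an \emph{infimum}, so admissible test functions bound $\Sigma(\l)$ from above, never from below; the inequality $\Sigma(\l)\geq\mf{J}(\tilde u)$ therefore does not follow from either definition, and approximating $\tilde u$ from within $\mc{P}$ is precisely what the paper argues is impossible (this is the content of the discussion around \eqref{conca} and \eqref{111}). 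The estimates \eqref{ine} from \cite{AL4} relate $\Sigma(\l)$ to fixed eigenvalues of $\O$ and $\O_0$, not to the particular solution $u$, so they cannot convert $\g\leq\Sigma(\l)$ into $\g<\Sigma(\l)$. The paper's mechanism for strictness is entirely different and is the idea missing from your proposal: Lemma~\ref{Le 42}. Any coexistence state is non-degenerate, because the linearized operator $\mf{L}\left(-\l+a\p_u f(\cdot,u_0)u_0+af(\cdot,u_0),-\l\right)$ has positive principal eigenvalue by monotonicity with respect to the potential, hence is an isomorphism; the Implicit Function Theorem then produces coexistence states of the perturbed system with $\a$ replaced by $\a+t$ for all $t\in[0,\e)$. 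Applying the already-proved non-strict bound to each perturbed problem gives $(\a+t)\b\leq\Sigma(\l)$ for all such $t$, which is incompatible with $\a\b=\Sigma(\l)$; hence $\g<\Sigma(\l)$. Without this perturbation step (or a rigorous substitute), your argument only establishes \eqref{43}, not \eqref{42}.
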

\begin{proof}
Suppose $a>0$ and $f$ satisfies {\rm (Af)}. Let $(u,v)>(0,0)$ be a
solution of \eqref{11}. Then, according to the Maximum Principle, we
have that $u\gg 0$ and $v\gg 0$. Moreover,
\begin{equation*}
  \left(\begin{array}{cc} -\Delta +af(\cdot,u) & -\alpha\\
     -\beta & -\Delta \end{array}\right)
     \left(\begin{array}{c} u \\ v \end{array}\right)=\lambda
     \left(\begin{array}{cc} u \\ v \end{array}\right)
\end{equation*}
and, hence, by the uniqueness of the principal eigenvalue,
\begin{equation*}
  \lambda = \sigma[\mathfrak{L}(af(\cdot,u),0),\Omega].
\end{equation*} 
As $a f(\cdot,u)>0$, we find from \eqref{16} and the monotonicity of the
principal eigenvalue with respect to the potential that
\begin{equation*}
  \lambda >\sigma[\mathfrak{L}(0,0),\Omega]=
  \sigma[\mathfrak{L}_0,\Omega]=\sigma_1-\sqrt{\alpha\beta}.
\end{equation*}
Moreover, since $v=0$ on $\partial\Omega$ and
\begin{equation*}
  (-\Delta-\lambda)v = \beta u >0 \qquad \hbox{in}\;\; \Omega,
\end{equation*}
it follows, from the Maximum Principle again, that
\begin{equation*}
  0 < \sigma[-\Delta-\lambda,\Omega]=\sigma_1-\lambda,
\end{equation*}
which completes the proof of \eqref{41}.
\par
Once we know that $\lambda <\sigma_1$, it can be inferred from the
$v$-equation of \eqref{11} that
\begin{equation*}
  v = \beta (-\Delta-\lambda)^{-1} u,
\end{equation*}
and, hence, substituting it into the $u$-equation, we are driven to
\begin{equation*}
  (-\Delta-\lambda)u = \alpha\beta (-\Delta-\lambda)^{-1} u
  - af(\cdot,u)u.
\end{equation*} 
Therefore,
\begin{equation}
\label{int1}
  (-\Delta-\lambda)u = \alpha\beta (-\Delta-\lambda)^{-1}
  u\qquad \hbox{in}\;\;\Omega_0,
\end{equation}
because $a=0$ in $\Omega_0$. 
Now, let $\tilde w$ be the profile where the spectral bound is reached and denoted by \eqref{311}. Then, 
\begin{equation*}
	\Sigma(\l)= \frac{\int_{\O_0} |\nabla \tilde w|^2 -\l \int_{\O_0} \tilde{w}^2}
	{\int_{\O_0} \left|(-\D-\l)^{-1/2} \tilde w\right|^2},
\end{equation*}
since $\tilde w_{|_{\overline \O_+}} = 0$. Hence, 
\begin{equation}
\label{int2}
  \inf \frac{\int_{\O_0} |\nabla u|^2 -\l \int_{\O_0} u^2}
	{\int_{\O_0} \left|(-\D-\l)^{-1/2} u\right|^2} \leq \Sigma(\l), \quad \hbox{with} \quad u \in H_0^1(\O_0).
\end{equation}
Observe that the equality is only true when $u = \tilde w$. 
Moreover,   
multiplying \eqref{int1} by $u$ and integrating by parts in $\O_0$ gives 
\begin{equation*}
  \frac{\int_{\O_0} |\nabla u|^2 -\l \int_{\O_0} u^2}
	{\int_{\O_0} \left|(-\D-\l)^{-1/2} u\right|^2} = \alpha\beta.
\end{equation*}
Then, 
\begin{equation*}
	\inf \frac{\int_{\O_0} |\nabla u|^2 -\l \int_{\O_0} u^2 }
	{\int_{\O_0} \left|(-\D-\l)^{-1/2} u\right|^2} = \a \b \quad \hbox{with} \quad u \in H_0^1(\O_0).
\end{equation*} 
Consequently, combining it with \eqref{int2} 
\begin{equation}
\label{43}
  \g:=\alpha \beta \leq \Sigma(\lambda).
\end{equation}
To conclude the proof we show the next lemma that actually 
sharpens \eqref{43} up to condition \eqref{42} combining it with 
\eqref{41}.
\begin{lemma}
\label{Le 42} Suppose $a>0$ in $\Omega$, $f \in
\mathcal{C}^{\mu,1+\mu}(\bar\Omega\times [0,\infty))$ satisfies
{\rm (Af)}, {\rm (Ag)} and \eqref{11} possesses a coexistence state. Then, there
exists $\varepsilon>0$ such that the perturbed problem
\begin{equation}
\label{45}
  \left\{ \begin{array}{ll} \begin{array}{l}
  -\Delta u=\lambda u + (\alpha+t) v - a f(\cdot,u)u\\
  -\Delta v=\beta u + \lambda v  \end{array}
  & \quad \hbox{in }\;\; \Omega,  \\ (u,v)=(0,0)
  &  \quad \hbox{on }\;\;\partial \Omega.\\
  \end{array}\right.
\end{equation}
has a coexistence state for every $t \in [0,\varepsilon)$.
\end{lemma}

\vspace{0.4cm}

\noindent \begin{proofof} It consists of a simple application of the
Implicit Function Theorem  based on the fact that any coexistence
state of \eqref{11} is \emph{non-degenerate}. Let $(u_0,v_0)$ be a
coexistence state of \eqref{11} and consider the operator
\begin{equation*}
  \mf{F}: E:= \mc{C}_0^{2+\mu}(\bar \O)\times \mc{C}_0^{2+\mu}(\bar
  \O)\times \re   \longrightarrow F:= \mc{C}^{\mu}(\bar \O)\times
  \mc{C}^{\mu}(\bar \O)
\end{equation*}
defined by
\begin{equation*}
  \mf{F}(u,v,t):= \left( \begin{array}{l}
  -\D u -\l u -(\a+t)v + a f(\cdot,u)u \\
  -\D v -\l v - \b u \end{array} \right),\qquad (u,v,t)\in E.
\end{equation*}
$\mf{F}$ is of class $\mc{C}^1$ and, by definition,
\begin{equation*}
  \mf{F}(u_0,v_0,0)=0.
\end{equation*}
Moreover, the differential operator
\begin{equation*}
  D_0 \mf{F}:= D_{(u,v)} \mf{F}(u_0,v_0,0)
  \in \mc{L}\left( \mc{C}_0^{2+\mu}(\bar \O)\times \mc{C}_0^{2+\mu}(\bar
  \O); \mc{C}^{\mu}(\bar \O)\times \mc{C}^{\mu}(\bar \O)\right)
\end{equation*}
is given by
\begin{align*}
  D_0 \mf{F}\binom{u}{v} & = \left( \begin{array}{cc}
  -\D  -\l  + a \p_u f(\cdot,u_0)u_0+ a f(\cdot,u_0) & -\a \\
  -\b & -\D  -\l   \end{array} \right)\left( \begin{array}{c}
  u \\  v \end{array} \right) \\ & =
  \mf{L}\left(-\l+ a \p_u f(\cdot,u_0)u_0+ a f(\cdot,u_0),-\l\right)
  \left( \begin{array}{c}  u \\  v \end{array} \right),
\end{align*}
where $\mf{L}(\cdot,\cdot)$ stands for the linear \emph{cooperative
operator} defined in \eqref{14}. According to assumptions (A) and (B), 
{\rm (Af)}, and {\rm (Ag)}, we find from the monotonicity of
the principal eigenvalue with respect to the potential that
\begin{align*}
  \s[\mf{L}\left(-\l \right. \!\! & \left. + a \p_u f(\cdot,u_0)u_0+ a
  f(\cdot,u_0),-\l\right),\O] \\ & = \s[\mf{L}\left(a \p_u f(\cdot,u_0)
  u_0+ a  f(\cdot,u_0),0\right),\O]-\l \\ & > \s[\mf{L}\left(a
  f(\cdot,u_0),0\right),\O]-\l =0.
\end{align*}
Therefore,  
the linearized operator
$D_0\mf{F}$ is an isomorphism with strong positive inverse, and,
consequently, thanks to the Implicit Function Theorem, there exist
$\e>0$ and two maps of class $\mc{C}^1$
\begin{equation*}
  U, V : (-\e,\e) \mapsto \mc{C}_0^{2+\mu}(\bar \O)\times
  \mc{C}_0^{2+\mu}(\bar \O)
\end{equation*}
such that
\begin{equation*}
  U(0)=u_0, \qquad V(0)=v_0,
\end{equation*}
and
\begin{equation*}
  \mf{F}(U(t),V(t),t)=0\qquad \hbox{for every}\;\; t \in (-\e,\e).
\end{equation*}
As $u_0$ and $v_0$ lie in the interior of the cone of positive
functions of the ordered Banach space $\mc{C}_0^1(\bar \O)$, it
becomes apparent that $(U(t),V(t))$ is a coexistence state of
\eqref{45} for sufficiently small $t>0$. This completes the proof.
\end{proofof}

\vspace{0.4cm}

Consequently, owing to \eqref{43} we find that $\a\b = \Sigma(\l)$ if 
\eqref{42} fails. Moreover, by  the analysis already
done in this proof,
\begin{equation*}
  (\a+t)\b \leq \Sigma(\l) \qquad \forall\;\; t \in [0,\e).
\end{equation*}
This contradiction shows that actually $\g:=\a\b<\Sigma(\l)$ and concludes the proof.
\end{proof}


\setcounter{equation}{0}
\section{Sufficient conditions for the existence}


\noindent As discussed in the introduction of this paper 
equation \eqref{18} admits positive solutions if and 
only if the system \eqref{11} possesses coexistence states. Since 
the operator \eqref{12} is not self-adjoint we ascertain the 
characterization of the positive solutions for equation \eqref{18}, 
for which a variational setting is guaranteed. That result provides 
us with the final characterization of coexistence states of \eqref{11}.
\par
In this context, the differential equation \eqref{18} can be viewed as 
the Euler-Lagrange equation represented by the functional 
denoted by \eqref{19} when \eqref{17} is satisfied.
\par
The next result is pivotal in ascertaining the characterization 
of the existence coexistence states of \eqref{11} and as discussed above, 
the existence of positive solutions of \eqref{18}. It provides 
us with the coercivity of the functional \eqref{19}. 
\begin{lemma}
\label{Le 51}
Suppose \eqref{pc} is satisfied then 
the functional $\mc{E}_\g(u)$ defined by \eqref{19} is coercive.
\end{lemma}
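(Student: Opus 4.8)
The goal is to show that the functional
\begin{equation*}
\mc{E}_\g(u) = \frac12\int_\O |\nabla u|^2 - \frac{\l}{2}\int_\O u^2 - \frac{\g}{2}\int_\O \left|(-\D-\l)^{-1/2} u\right|^2 + \int_\O F(x,u)
\end{equation*}
is coercive on $H_0^1(\O)$, i.e. $\mc{E}_\g(u)\to\infty$ as $\|u\|_{H_0^1(\O)}\to\infty$. The plan is to bound the quadratic part $\mc{E}_1(u)$ from below by a positive multiple of $\|u\|_{H_0^1(\O)}^2$ and then observe that $\mc{E}_2(u)=\int_\O F(x,u)\geq 0$ only helps, being nonnegative by (Af). So the real content is the coercivity of the quadratic form $\mc{E}_1$ under the hypothesis \eqref{pc}, that is, under $(\s_1-\l)^2<\g<\Sigma(\l)$.

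First I would diagonalize everything against the eigenbasis of $-\D$ on $H_0^1(\O)$. Let $\{\phi_k\}$ be the orthonormal (in $L^2$) eigenfunctions with eigenvalues $0<\s_1\leq\s_2\leq\cdots$, and write $u=\sum_k c_k\phi_k$. Since $\l<\s_1$, the operator $(-\D-\l)^{-1/2}$ acts diagonally with eigenvalues $(\s_k-\l)^{-1/2}$, all positive. Then
\begin{equation*}
2\mc{E}_1(u) = \sum_k c_k^2\left[(\s_k-\l) - \frac{\g}{\s_k-\l}\right] = \sum_k \frac{c_k^2}{\s_k-\l}\left[(\s_k-\l)^2 - \g\right].
\end{equation*}
The upper bound $\g<\Sigma(\l)$ together with the estimate \eqref{ine}, namely $\Sigma(\l)\leq(\s_1^0-\l)^2$ with $\s_1^0=\s[-\D,\O_0]>\s_1$, is what should be exploited: the troublesome negative contributions come precisely from the low modes where $(\s_k-\l)^2<\g$. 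The key point is that the lower bound $\g>(\s_1-\l)^2$ in \eqref{pc} is \emph{not} what gives coercivity directly on $H_0^1(\O)$; rather, coercivity must come from the fact that the quadratic form is controlled once one accounts for $\O_0$ versus $\O_+$.

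The cleanest route, and the one I expect the author to follow, is to use the variational characterization \eqref{exva} of $\Sigma(\l)$ restricted to $\O_0$ together with the fact that $F(x,u)\geq 0$ on all of $\O$ and is strictly coercive on $\O_+$ (since $a>0$ there and $f(\cdot,u)\geq g(u)\to\infty$). Concretely, I would split $\O=\O_0\cup\O_+$ and argue that on the subspace of functions supported essentially in $\O_0$ the inequality $\g<\Sigma(\l)$ forces the quadratic form $\int_{\O_0}|\nabla u|^2-\l\int_{\O_0}u^2-\g\int_{\O_0}|(-\D-\l)^{-1/2}u|^2$ to dominate a positive multiple of $\int_{\O_0}|\nabla u|^2$, by the very definition of $\Sigma(\l)$ as an infimum of the associated Rayleigh quotient; any deficit on $\O_+$ is then absorbed by the superlinear, sign-definite term $\int_{\O_+}F(x,u)$. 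Thus one would establish an estimate of the form $\mc{E}_\g(u)\geq \delta\|u\|_{H_0^1(\O)}^2 - C$ for some $\delta>0$, $C\geq 0$.

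The main obstacle is the presence of the \emph{nonlocal} term $\int_\O|(-\D-\l)^{-1/2}u|^2$, whose natural domain of positivity is all of $\O$, not $\O_0$: the operator $(-\D-\l)^{-1/2}$ does not respect the decomposition $\O=\O_0\cup\O_+$, so one cannot simply restrict it to $\O_0$. Reconciling the global nonlocal term with the localized variational quantity $\Sigma(\l)$ defined via $H_0^1(\O_0)$ is the delicate step, and it is exactly the point where the gap flagged in the introduction — that the extremal profile \eqref{311} lies outside $\mc{P}$ and that \eqref{111} is only conjectural — makes itself felt. I would therefore expect the rigorous argument to proceed by a contradiction/compactness scheme: assume a sequence $u_n$ with $\|u_n\|_{H_0^1(\O)}\to\infty$ but $\mc{E}_\g(u_n)$ bounded above, normalize $w_n:=u_n/\|u_n\|_{H_0^1(\O)}$, extract a weak limit $w$, and show that the zero-mode behaviour forces $w$ to concentrate on $\O_0$ and realize the Rayleigh quotient bounded by $\g\geq\Sigma(\l)$, contradicting $\g<\Sigma(\l)$. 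The nonnegativity and superlinear growth of $F$ on $\O_+$ (via (Ag)) is what kills the possibility of mass escaping into $\O_+$, and the weak lower semicontinuity established in Lemmas~\ref{Le 25}, \ref{Le 27} and \ref{Le 26} is what makes the limit argument legitimate.
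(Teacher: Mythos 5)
Your final contradiction/compactness paragraph is essentially the paper's own proof: the paper assumes $\mc{E}_\g(u_n)\leq C$ with $\|u_n\|_{H_0^1(\O)}\to\infty$, normalizes $w_n:=u_n/\|u_n\|_{L^2(\O)}$ (after first showing $\|u_n\|_{L^2(\O)}\to\infty$), shows via Fatou's lemma and the superlinear growth of $F$ on $\{a>0\}$ that the limit $w_0$ vanishes on $\O_+$, and passes to the limit in the normalized energy inequality to obtain a Rayleigh quotient $\leq\g$ for a normalized $w_0\in H_0^1(\O_0)$, contradicting the variational characterization \eqref{exva} of $\Sigma(\l)$. Your opening plan (bounding the quadratic part $\mc{E}_1$ below by a positive multiple of $\|u\|_{H_0^1(\O)}^2$) is indeed impossible under \eqref{pc}, since $\mc{E}_1(t\phi_1)\to-\infty$ as $t\to\infty$ whenever $\g>(\s_1-\l)^2$, but you correctly discard it, and the remaining deviations from the paper (normalizing by the $H_0^1$ norm rather than the $L^2$ norm, and invoking weak lower semicontinuity where the paper instead proves the normalized sequence is Cauchy in $H_0^1(\O)$) are immaterial to the argument.
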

\begin{proof}
We argue by contradiction. Then, suppose \eqref{19} is 
not coercive, i.e.,
\begin{equation}
\label{prin}
	\mc{E}_\g(u_n) \leq C,
\end{equation}
such that there exists a sequence $\{(u_n)\}$ for 
which 
\begin{equation}
\label{51}
	\left\|u_n\right\|_{H_0^1(\O)}\rightarrow \infty
\end{equation}
as $n \rightarrow\infty$, holds. Note that we also have that 
$\left\|u_n\right\|_{L^2(\O)}\rightarrow \infty$, as $n \rightarrow\infty$, from 
\eqref{51} and the structure of the non-local 
compact operator. Indeed, to prove it we suppose that $\left\|u_n\right\|_{L^2(\O)}$ is bounded 
for any $n\geq 1$. Then, since $(-\D-\l)^{-1/2}$ is a compact operator from $L^2(\O)$ to itself we find that 
$$
	\left\|(-\D-\l)^{-1/2} u_n\right\|_{L^2(\O)}\leq C \left\|u_n\right\|_{L^2(\O)},
$$
for a positive constant $C>0$. Hence, since $\mc{E}_\g(u_n)$ is bounded \eqref{prin} we find that 
$$
	\int_\O |\nabla u_n|^2 \leq   C+ 
	\a\b \int_\O \left|(-\D-\l)^{-1/2} u_n\right|^2 + \l \int_\O \left|u_n\right|^2\leq C + K \int_\O \left|u_n\right|^2,
$$
for some positive constant $K>0$. This obviously, owing to \eqref{51}, 
contradicts our assumption about the boundedness of $\left\|u_n\right\|_{L^2(\O)}$ for any $n\geq 1$. 
Therefore, 
$$\left\|u_n\right\|_{L^2(\O)}\rightarrow \infty\quad \hbox{as} \quad n \rightarrow\infty.$$
\par
Furthermore, just remember that according to \eqref{prin} $\mc{E}_\g(u_n)\leq C$, for any $n\geq 1$,  
and some positive constant $C>0$. Thus, we can ensure that  
$$
	\limsup_{n \rightarrow\infty} \frac{\mc{E}_\g(u_n)}{\left\|(u_n)\right\|_{L^2(\O)}^2} 
	\leq 0
$$
and, hence, 
\begin{equation}
\label{57}
	\limsup_{n \rightarrow\infty}  \frac{1}{2}\int_\O |\nabla w_n|^2 
	- \frac{\l}{2} 
	+ 	\int_\O \frac{F(x,u_n)}{\left\|u_n\right\|_{L^2(\O)}^2} - \frac{\a\b}{2} \int_\O \left|(-\D-\l)^{-1/2} w_n\right|^2 
	\leq 0,
\end{equation}
with
\begin{equation}
\label{53}
	w_n:= \frac{u_n}{\left\|u_n\right\|_{L^2(\O)}}. 
\end{equation} 
Then, $\left\|w_n\right\|_{L^2(\O)}=1$.

On the other hand, owing to \eqref{17} and the fact that 
$\int_\O \left|(-\D-\l)^{-1/2} w_n\right|^2\leq K$ is bounded, since 
the non-local operator is compact and $\{w_n\}$ is bounded sequence,  
we have that 
\begin{equation}
\label{54}
	\int_\O |\nabla w_n|^2\leq K\quad \hbox{and}\quad 
	\int_\O \frac{F(x,u_n)}{\left\|u_n\right\|_{L^2(\O)}^2} \leq K,
\end{equation}
for some positive constant $K>0$.
Then, $\{w_n\}$ is a bounded sequence in $H_0^1(\O)$ hence it converges weakly in $H_0^1(\O)$. Moreover, as the imbedding 
\begin{equation*}
  H_0^1(\O)\hookrightarrow L^2(\O)
\end{equation*}
is compact, for every $n\geq 1$, due to the normalization 
of $w_n$ in $L^2(\O)$ and \eqref{54}  
there exists a subsequence of $\{w_n\}_{n\geq 1}$,
again labelled by $n$, and $w_0\in L^2(\O)$ such that
\begin{equation}
\label{55}
 \lim_{n\to\infty} \|w_{n}-w_0\|_{L^2(\O)}=0.
\end{equation} 
In addition, we will prove that $\{w_{n}\}_{n\geq 1}$ is  actually a
Cauchy sequence in $H_0^1(\O)$ using an argument shown in \cite{AL-JDE}. This implies that $w_0\in H_0^1(\O)$
and
\begin{equation}
\label{b45}
    \lim_{n\to \infty}\|w_{n}-w_0\|_{H_0^1(\O)}=0.
\end{equation}
Indeed, for every $n<m$, we have that $w_{n} < w_{m}$ and 
\begin{align*}
      \int_{\O} \left|\nabla (w_{n} - w_{m})\right|^2 & =
    \int_{\O}\left|\nabla w_{n}\right|^2+
    \int_{\O}\left|\nabla w_{m} \right|^2- 2
    \int_{\O}\left\langle \nabla w_{n} ,\nabla w_{m}\right\rangle \\
     & =\l (\int_{\O} w_{n}^2 + \int_{\O} w_{m}^2- 2 \int_{\O} w_{n}w_{m}) 
     + \a\b \int_\O \left|(-\D-\l)^{-1/2} w_n\right|^2 \\ & + 
     \a\b\big( \int_\O \left|(-\D-\l)^{-1/2} w_m \right|^2
     - 2 \int_\O w_{m} (-\D-\l)^{-1} w_n \big) 
    \\ & - \int_\O a(x) f(x,u_n) w_n^2-\int_\O a(x) f(x,u_m) w_m^2\\ & + 
    2  \int_\O a(x) f(x,u_n)  w_n w_m .
\end{align*}
Thus, rearranging terms and due to the monotonicity of the 
function $f$, supposed by the assumption (Af), and the final discussion 
of section 2 we are driven to the inequality
\begin{align*}
	\int_{\O} \left|\nabla (w_{n} - w_{m})\right|^2 & =
	\l \int_{\O}(w_{n}-w_{m})^2 
	-\int_\O a(x) f(x,u_n) (w_{n}-w_{m})^2
	\\ & + C_1 \int_{\O}(w_{n}-w_{m})^2 \leq \l \int_{\O}(w_{n}-w_{m})^2 + C_1 \int_{\O}(w_{n}-w_{m})^2,
\end{align*}
for a positive constant $C_1>0$ whose specific value is not important.
Consequently, according to H\"{o}lder's inequality and the fact that 
the sequence $\{w_n\}$ is already a Cauchy sequence in $L^2(\O)$ 
it becomes apparent that $\{w_n\}_{n\geq 1}$ is a Cauchy sequence in
$H_0^1(\O)$ and, therefore, $w_0 \in H_0^1(\O)$ and \eqref{b45}
holds. Note that,
\begin{equation}
\label{b47}
  w_0 \geq 0 \qquad \hbox{and}\qquad \int_\O w_0^2 =1.
\end{equation}
Moreover, from the fact that $\left\|u_n\right\|_{L^2(\O)}\rightarrow \infty$, as $n \rightarrow\infty$, and  
thanks to \eqref{54} and Fatou's Lemma 
we find that
\begin{equation}
\label{56}
  w_0 =0 \qquad \hbox{in}\quad
  \O_+=\{x\in\O:a(x)>0\}.
\end{equation}
Passing to the limit in \eqref{57} as $n \rightarrow\infty$ gives  
\begin{equation}
\label{p33}
	\frac{\int_{\O_0} |\nabla w_0|^2 -\l}{\int_{\O_0} \left|(-\D-\l)^{-1/2} w_0\right|^2} 
	\leq \a\b,
\end{equation}
since $w_0=0$ in $\O_+ = \O\setminus \O_0$ by \eqref{56}. Furthermore, we are supposing that 
$\a\b <\Sigma(\l)$
which in its variational expression means that 
$$\a\b <\Sigma(\l):= \inf \frac{\int_{\O_0} |\nabla w|^2 -\l}{\int_{\O_0} \left|(-\D-\l)^{-1/2} w\right|^2}
\quad \hbox{such that} \quad w\in H_0^1(\O_0) \quad \hbox{and} \quad \int_{\O_0} w^2 =1.$$
Thus, for any sequence that fulfills \eqref{51} we find that the functional 
$\mc{E}_\g$ must be bounded if \eqref{p33} is satisfied so it must be also 
true for the supremum  
which clearly contradicts \eqref{pc}. 
Therefore, the functional $\mc{E}_\g$ is 
coercive for that range of $\g=\a\b$ for which condition \eqref{pc} is 
satisfied. That completes the proof.
\end{proof} 

\begin{remark}
{\rm The proof of coercivity for the functional \eqref{alt2} will follow a similar 
argument with a couple of differences. First, to prove the convergence of a sequence 
$\{w_{n,1},w_{n,2}\}_{n\geq 1}$ is actually a
Cauchy sequence in $H_0^1(\O)\times H_0^1(\O)$ such that 
$$w_{n,1}:= \frac{u_n}{\left\| (u_n,v_n)\right\|_{L^2(\O)\times L^2(\O)}}\quad w_{n,2}:= \frac{v_n}{\left\| (u_n,v_n)\right\|_{L^2(\O)\times L^2(\O)}}$$
we can use an argument shown in \cite{AL-JDE} in which such a convergence is obtained for a class of cooperative systems  such as \eqref{alt1}, having
that 
\begin{equation}
\label{limalt}
    \lim_{n\to \infty}\|(w_{n,1},w_{n,2})-(w_{0,1},w_{0,2})\|_{H_0^1(\O)\times H_0^1(\O)}=0.
\end{equation}
with $(w_{0,1},w_{0,2})\in H_0^1(\O)\times H_0^1(\O)$, $w_{0,1},w_{0,2}\geq 0$,
\begin{equation}
\label{nullcon}
 w_{0,1} =w_{0,2}=0 \qquad \hbox{in}\quad
  \O_+=\{x\in\O:a(x)>0\}.
  \end{equation}
Indeed, assuming that
$$\mc{J}_\l(u_n,v_n)\leq C,\quad \hbox{for any}\quad n\geq 1,$$  
and some positive constant $C>0$, if 
$$\left\| (u_n,v_n)\right\|_{L^2(\O)\times L^2(\O)} \rightarrow \infty\quad \hbox{as} \quad n \rightarrow\infty,$$
 we can ensure that  
$$
	\limsup_{n \rightarrow\infty} \frac{\mc{J}_\l(u_n,v_n)}{\left\| (u_n,v_n)\right\|_{L^2(\O)\times L^2(\O)}^2} 
	\leq 0
$$
and, hence, 
\begin{equation}
\label{boundfun}
\begin{split}
	\limsup_{n \rightarrow\infty}  \frac{1}{2\a}\int_\O |\nabla w_{n,1}|^2 & + \frac{1}{2\b}\int_\O |\nabla w_{n,2}|^2  
	- \frac{\l}{2\a}  \int_\O  w_{n,1}^2 -  \frac{\l}{2\b}  \int_\O  w_{n,2}^2 \\ & - \int_\O w_{n,1} w_{n,2} 
	+ \frac{1}{\a}	\int_\O \frac{F(x,u_n)}{\left\| (u_n,v_n)\right\|_{L^2(\O)\times L^2(\O)}} 
	\leq 0,
\end{split}
\end{equation}
Then, similarly as done above for the functional \eqref{19}, and thanks to the convergence \eqref{limalt} we can easily see that 
$$w_{0,1}=0  \qquad \hbox{in}\quad
  \O_+=\{x\in\O:a(x)>0\},$$
  since from \eqref{boundfun} and the bounded norms in $L^2(\O)\times L^2(\O)$, for the sequence $(w_{n,1},w_{n,2})$, we find that
  $$\int_\O \frac{F(x,u_n)}{\left\| (u_n,v_n)\right\|_{L^2(\O)\times L^2(\O)}}\leq K,$$
  for some positive constant $K$. In fact, we have again a bounded sequence in $H_0^1(\O)\times H_0^1(\O)$. 
  Moreover, due to \cite[Lemma 3.6]{AL-NA} which is a consequence of the cooperative character of the system \eqref{11}, we actually have \eqref{nullcon}.
  In other words, either both components are strictly positive or they vanish in the same regions.
Thus, arguing by contradiction as above and assuming a normalization of the form
$$\int_\O w_{0,1}^2 =1\quad \hbox{and}\quad \int_\O w_{0,2}^2 =1,$$
we arrive at the expression
$$ \frac{ \frac{1}{\a}\int_{\O_0} |\nabla w_{0,1}|^2 +  \frac{1}{\b}\int_{\O_0} |\nabla w_{0,2} |^2 - 2  \int_{\O_0} w_{0,1} w_{0,2} }{ \frac{1}{\a }+ \frac{1}{\b }} \leq \l,$$
which contradicts condition \eqref{altpc} in Theorem\;\ref{Th alt1}, since
$$\s[\mf{L}_1,\O_0] = \inf \frac{ \frac{1}{\a}\int_{\O_0} |\nabla w_{1}|^2 +  \frac{1}{\b}\int_{\O_0} |\nabla w_{2} |^2 - 2  \int_{\O_0} w_{1} w_{2} }{ \frac{1}{\a }+ \frac{1}{\b }}.$$
}

\end{remark}

Lemma\;\ref{Le 51} ensures us that the minimizing sequence will be bounded 
under those restrictions in $H_0^1(\O)$. Now, we prove that indeed, 
the minimizer is attained. Hence, the existence of a 
weak solution is achieved when the cooperative effects $\g:=\a\b$ 
fulfill \eqref{pc}. By elliptic 
regularity we can obtain the existence of a classical solution as well. 

\begin{proposition}
\label{Pr 52}
Suppose the functional $\mc{E}_\g(u)$ defined by \eqref{19} is (wls), coercive and  
condition \eqref{pc} is satisfied. 
Then, there exists a positive minimizer $u_0>0$  
which is indeed attained.
\end{proposition}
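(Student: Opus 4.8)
The plan is to run the direct method of the calculus of variations and then promote the resulting minimizer to a strictly positive one. First I would fix a minimizing sequence $\{u_n\}\subset H_0^1(\O)$, so that $\mc{E}_\g(u_n)\to m:=\inf_{H_0^1(\O)}\mc{E}_\g$. The coercivity granted by Lemma~\ref{Le 51} (valid under \eqref{pc}) forces $\{u_n\}$ to be bounded in $H_0^1(\O)$; since this space is Hilbert, after passing to a subsequence I obtain $u_n\rightharpoonup u_0$ weakly in $H_0^1(\O)$. The weak lower semicontinuity of $\mc{E}_\g$—assembled from Lemma~\ref{Le 25} for the gradient and $L^2$ terms of \eqref{21} and from Lemmas~\ref{Le 27} and \ref{Le 26} for the nonlocal term and for $\mc{E}_2$, in the sense of Definition~\ref{De 23}—then gives
\[
\mc{E}_\g(u_0)\le\liminf_{n\to\infty}\mc{E}_\g(u_n)=m,
\]
so that $u_0$ is a minimizer and the infimum is attained.

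Next I would show that $u_0\not\equiv0$ by checking that the infimum is \emph{strictly negative}, which is exactly where the lower bound $(\sigma_1-\l)^2<\g$ in \eqref{pc} is used. Testing on $t\phi_1$ and using the eigenrelation $(-\D-\l)\phi_1=(\sigma_1-\l)\phi_1$ from \eqref{16}, together with $\l<\sigma_1$ from \eqref{17} (so that $(-\D-\l)^{-1/2}\phi_1=(\sigma_1-\l)^{-1/2}\phi_1$), the quadratic part collapses to
\[
\mc{E}_1(t\phi_1)=\frac{t^2}{2(\sigma_1-\l)}\big[(\sigma_1-\l)^2-\g\big]\int_\O\phi_1^2<0 ,
\]
while $\mc{E}_2(t\phi_1)=\int_\O F(x,t\phi_1)=O(t^3)$ as $t\downarrow0$, since $f(x,0)=0$ forces $F(x,u)=O(u^3)$ near the origin by (Af). Hence $\mc{E}_\g(t\phi_1)<0$ for small $t>0$, so $m<0=\mc{E}_\g(0)$ and $u_0\neq0$.

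Finally, for positivity I would replace $u_0$ by $|u_0|$. The Dirichlet and $L^2$ terms are unchanged, and with the even extension of $F(x,\cdot)$ the potential term is likewise unchanged; the delicate point is the nonlocal term, which does \emph{not} decrease under this replacement. Writing $\int_\O|(-\D-\l)^{-1/2}u|^2=\langle u,(-\D-\l)^{-1}u\rangle$ and using that the Green's kernel of $(-\D-\l)^{-1}$ is strictly positive (legitimate since $\l<\sigma_1$ by \eqref{17}), one gets $\langle|u_0|,(-\D-\l)^{-1}|u_0|\rangle\ge\langle u_0,(-\D-\l)^{-1}u_0\rangle$; as this term enters \eqref{19} with a minus sign, $\mc{E}_\g(|u_0|)\le\mc{E}_\g(u_0)$. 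Thus $|u_0|$ is again a minimizer and I may assume $u_0\ge0$, $u_0\not\equiv0$. Being a free minimizer of the Fréchet-differentiable functional, $u_0$ is a critical point, hence solves \eqref{18} weakly and, by elliptic regularity, classically; the strong maximum principle together with Hopf's lemma then upgrades $u_0\ge0$ to $u_0\gg0$, i.e. $u_0>0$.

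The direct-method step is routine once coercivity and weak lower semicontinuity are in hand. The two steps requiring genuine care are the strict negativity of the infimum—where the lower bound in \eqref{pc} is indispensable and the cubic vanishing of $F$ must beat the quadratic gain—and, above all, the monotone behaviour of the nonlocal term under $u\mapsto|u|$, which rests on the positivity of the Green's function of $(-\D-\l)^{-1}$; I expect this last point to be the main obstacle.
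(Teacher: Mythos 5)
Your proposal is correct, and although it shares the paper's skeleton---the direct method (boundedness of a minimizing sequence from the coercivity of Lemma~\ref{Le 51}, weak convergence, and lower semicontinuity assembled from Lemmas~\ref{Le 25}, \ref{Le 27} and \ref{Le 26})---it diverges from the paper at precisely the two delicate points, and in both cases your treatment is the more rigorous one. For non-triviality the paper tests $\mc{E}_\g$ against a small positive \emph{constant} $M$, computing $\mc{E}_\g(M)=-\frac{\l}{2}M^2|\O|+\int_\O F(x,M)-\frac{\g}{2}M^2|\O|<0$; this has two defects that your choice of $t\phi_1$ avoids: a nonzero constant does not belong to $H_0^1(\O)$ (it violates the Dirichlet condition, so the displayed formula, in particular for the nonlocal term, is unjustified), and the leading coefficient $-(\l+\g)/2$ need not be negative when $\l<0$ and $\g$ is close to $(\sigma_1-\l)^2$, so the sign conclusion can actually fail. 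Your computation $\mc{E}_1(t\phi_1)=\frac{t^2}{2(\sigma_1-\l)}\bigl[(\sigma_1-\l)^2-\g\bigr]\int_\O\phi_1^2<0$, combined with $\mc{E}_2(t\phi_1)=O(t^3)$ from (Af), is admissible, works for every $\l<\sigma_1$, and uses the lower bound of \eqref{pc} exactly where it is needed. For positivity the paper merely asserts that ``a positive critical point of $\mc{E}_\g$ exists''; you supply the missing argument: the replacement $u_0\mapsto|u_0|$ does not increase $\mc{E}_\g$ because the Green kernel of $(-\D-\l)^{-1}$ is positive for $\l<\sigma_1$ (so the nonlocal term, which enters \eqref{19} with a minus sign, does not decrease), and then the strong maximum principle applied to $(-\D-\l+af(\cdot,u_0))u_0=\g(-\D-\l)^{-1}u_0\ge0$, legitimate since $\s[-\D-\l+af(\cdot,u_0),\O]\ge\sigma_1-\l>0$, upgrades $u_0\ge0$, $u_0\not\equiv0$ to $u_0\gg0$. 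In short, your route costs two extra observations (the eigenfunction test and the kernel-positivity monotonicity) and buys a complete proof at the two places where the paper's own argument is either inadmissible or absent.
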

\begin{proof}
We argue by contradiction. Suppose the functional is not bounded below, i.e., 
$\mc{E}_\g(u_n)< -n$, for any convergent sequence $\{u_n\}$ in $X$. 
Since, that sequence converges weakly in $H_0^1(\O)$ we have that
\begin{equation}
\label{59}
	\left\|u_n\right\|_{H_0^1(\O)}\leq K.
\end{equation}
for a positive constant $K>0$. Hence, there exists 
a convergent subsequence such that $u_{n_m} \rightharpoonup u$,
as $m \rightarrow\infty$, for some $u\in H_0^1(\O)$. 
However, due to the fact that the functional $\mc{E}_\g$ is (wls) we obtain that 
$$
	\mc{E}_\g(u_0) \leq  \liminf_{m\rightarrow \infty}\mc{E}_\g(u_{n_m})<-\infty,
$$
which contradicts \eqref{59}. 
Consequently, the limit exists and thanks to the coercivity of the functional 
when $0<(\sigma_1-\lambda)^2  < \alpha\beta=\g < \Sigma(\lambda)$ is finite,
$$
	\inf_{u\in H_0^1(\O)} \mc{E}_\g(u) = d < \infty.
$$
Thus, taking a minimizing sequence $\{u_n\}$, bounded 
because of the coercivity, yields
$$
	\lim_{n \longrightarrow \infty} \mc{E}_\g(u_n)= d.
$$
Then, a convergent subsequence might be chosen, such that 
$u_{n_m}\longrightarrow u_0$, 
as $m \rightarrow\infty$. 
Hence, 
$$
	\mc{E}_\g(u_0) \leq  \liminf_{m\rightarrow \infty}\mc{E}_\g(u_{n_m})=
	\lim_{n\rightarrow \infty}\mc{E}_\g(u_n)=d=\inf_{u\in H_0^1(\O)} \mc{E}_\g(u)
$$
so that $\mc{E}_\g(u_0)=d$. Moreover, since $\mc{E}_\g(0)=0$, taking a constant  
$M$ sufficiently close to $0$ and thanks to $(Af)$ we find that 
$$
	\mc{E}_\g(M)=-\frac{\l}{2} M^2 \left|\O\right|+  \int_\O F(x,M)
	-\frac{1}{2} \a\b M^2 \left|\O\right|< 0=\mc{E}_\g(0).
$$
Therefore, the minimizer $u_0 \in H_0^1 (\O)$ 
is not identically zero. Indeed a positive critical point of 
$\mc{E}_\g$ exists. This completes the proof.
\end{proof}

Furthermore, to prove the uniqueness of the
positive solutions of \eqref{18} and, hence, 
the coexistence states of \eqref{11} we go back to 
\cite[Lemma\;3.7]{AL-NA}. Thus, we again proceed by contradiction. 
Suppose \eqref{18} has two positive solutions such that 
$u_1\neq u_2$. Then, $w:=u_2-u_1>0$ and  
\begin{equation}
\label{510}
  \left\{ \begin{array}{ll}
  \left(-\Delta -\lambda - \alpha\beta (-\Delta-\lambda)^{-1}
  +V \right)w =0& \quad \hbox{in} \;\; \Omega,
  \\ w =0 & \quad \hbox{on} \;\; \partial\Omega,\end{array} \right.
\end{equation}
where  $V$ is given through
$$
      V := a \int_0^1  \frac{\p f}{\p u}(\cdot,tu_2 +(1-t)u_1) (tu_2+(1-t)
    u_1)\,dt +a \int_0^1 f(\cdot,tu_2+(1-t)u_1)\,dt.
$$
By the Maximum Principle,  $u_1\gg 0$  and  $u_2\gg 0$. 
Thus, (A) and (B) imply
$$
  V  >a \int_0^1 f(\cdot,tu_2+(1-t)u_1)\,dt\geq a
  f(\cdot,u_1),
$$
since  $u_2>u_1$. Therefore, by the monotonicity of the principal eigenvalue with
respect to the potential, we find that
$$
  \s\left[-\D-\l -\a\b  (-\Delta-\lambda)^{-1} + V ,\O\right]
  >\s\left[-\D-\l -\a\b  (-\Delta-\lambda)^{-1} + a f(\cdot,u_1),\O\right]=0.
$$
On the other hand, by \eqref{510}, $w\gg 0$ provides
us with an eigenfunction of 
$$-\D-\l -\a\b  (-\Delta-\lambda)^{-1} + V,$$
associated with the eigenvalue 0 and, consequently,
$$
    \s\left[-\D-\l -\a\b  (-\Delta-\lambda)^{-1} + V ,\O\right]=0;
$$
leading to a contradiction which ends the proof of the uniqueness.
\par

\setcounter{equation}{0}
\section{Limiting behaviour at the values $(\s_1-\l)^2$ and $\Sigma(\l)$}

\noindent In this section we analyze the limiting behaviour of the 
positive solution of the problem \eqref{18} when the 
parameter $\g:=\a\b$ approaches 
the limiting values for which the existence of positive solutions is 
held. The parameter $\g$ represents the cooperative effects between 
the components of the cooperative system \eqref{11}. So, under condition 
\eqref{17} we also ascertain the limiting behaviour for the (unique) 
coexistence state.
\par
Fixed $\gamma$ as the main continuation parameter, $u(\gamma)$ is regarded as the unique positive solution of
\eqref{18}. Then, it provides us with a zero of 
the operator
$$
  \mf{F}: E:= \mc{C}_0^{2+\mu}(\bar \O)\times \re   
  \longrightarrow   \mc{C}^{\mu}(\bar \O)
$$
defined by 
\begin{equation}
\label{61}
  \mf{F}(u,\g):=  (-\D-\l) u -\g (-\D-\l)^{-1} u + a f(\cdot,u)u,\qquad (u,\g)\in E.
\end{equation}
Moreover, as soon as $a>0$ in $\O$ and \eqref{18} admits a positive solution 
by the Implicit Function Theorem (IFT) we find that $\g \rightarrow u(\g)$ is 
a mapping of class $\mc{C}^1$ and increasing. Actually, applying the 
IFT we can differentiate the identity 
$$
  \mf{F}(u(\g),\g)=0
$$
with respect to $\g$ obtaining  that
$$
  D_{u}\mf{F}(u(\g),\g)D_\g u(\g)+
  D_\g \mf{F}(u(\g),\g)=0, 
$$ 
where $D_{u}\mf{F}(u(\g),\g)=(-\D-\l) -\g (-\D-\l)^{-1} + a f(\cdot,u)+a\p_u f(\cdot,u)$ 
and, hence, by assumptions (A), (B), (Af) and (Ag)  
we find that 
\begin{align*}
  \s[-\D-\l -\g (-\D-\l)^{-1} & + a f(\cdot,u)+a\p_u f(\cdot,u),\O] 
  \\ & \geq \s[-\D-\l -\g (-\D-\l)^{-1} + a f(\cdot,u),\O]=0,
\end{align*} 
just applying the monotonicity 
of the principal eigenvalue with respect to the potential.
Then, the operator $D_{u}\mf{F}(u(\g),\g)$ is an isomorphism and, hence, invertible 
in the way that its inverse $(D_u \mf{F})^{-1}$ is strongly positive. 
Moreover, differentiating with respect to $\g$ the operator \eqref{61}
yields
$$
  D_\g \mf{F}(u(\g),\g)=-(-\D-\l)^{-1}u(\g).
$$
Hence, since $(-\D-\l)^{-1}$ is also a positive operator we obtain 
\begin{equation}
\label{62}
  D_\g u(\g)= \left( D_\g \mf{F}\right)^{-1} (-\D-\l)^{-1}u(\g) \gg 0.
\end{equation}
In particular, regarding $\g$ as the main continuation parameter, 
the structure of the positive solutions of \eqref{18} consists 
of an increasing curve of class $\mc{C}^1$ 
$$
  \g \mapsto u(\g), \qquad \hbox{with} \quad 0<(\s_1-\l)^2< \g < \Sigma(\l).
$$
The next result provides us with the limiting behaviour 
of the positive solution when the parameter $\g$ approximates 
the external values of the interval of existence 
$$\mc{I}:=((\s_1-\l)^2,\Sigma(\l)).$$

\begin{proposition}
\label{Pr 61}
Suppose $a(x)$ satisfies the assumptions (A) and (B), and $f$ satisfies (Af) and (Ag). Then,
\begin{equation}
\label{63}
  \lim_{\g\downarrow (\s_1-\l)^2} u(\g)=0,
\end{equation}
and
\begin{equation}
\label{64}
  \lim_{\g\uparrow \Sigma(\l)} \|u(\g)\|_{_{\mc{C}(\bar \O)}} =\infty.
\end{equation}
Indeed, supposing that there exists $w\in \mc{C}_0^2(\bar\O_0)$ with 
$w|_{\p\O}=0$ and $w|_{\O_0}>0$ such that $\hat{w}$ is 
the function defined by \eqref{311} where $\Sigma(\l)$ is reached then,
\begin{equation}
\label{65}
  \lim_{\g\uparrow \Sigma(\l)} u(\g)=\infty, \quad \hbox{uniformly in compact subsets 
  of} \quad \bar\O.
\end{equation}
\end{proposition}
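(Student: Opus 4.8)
The three limits rest on \eqref{62}: the map $\g\mapsto u(\g)$ is of class $\mc{C}^1$ and pointwise increasing on $\mc{I}=((\sigma_1-\l)^2,\Sigma(\l))$, so the monotone pointwise limit exists at each endpoint, and the guiding principle is that a nontrivial \emph{finite} limit would solve \eqref{18} exactly at that endpoint, contradicting the strict inequalities \eqref{41}--\eqref{42} of Proposition \ref{Pr 41}. For \eqref{63}, $u(\g)$ decreases and is bounded above by $u(\g_0)$ for fixed $\g_0\in\mc{I}$; testing \eqref{18} gives an $H_0^1(\O)$ bound, so by elliptic regularity and compactness of $(-\D-\l)^{-1}$ we obtain $u(\g)\to u_*\ge0$ in $\mc{C}(\bar\O)$, solving \eqref{18} at $\g=(\sigma_1-\l)^2$. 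If $u_*\not\equiv0$ then $u_*\gg0$ is a coexistence state, whence \eqref{41} demands $\sigma_1-\l<\sqrt{\g}=\sigma_1-\l$, impossible; so $u_*\equiv0$. For \eqref{64}, $u(\g)$ increases, and were $\|u(\g)\|_{\mc{C}(\bar\O)}$ to stay bounded the same compactness argument would yield a positive solution at $\g=\Sigma(\l)$, contradicting $\a\b<\Sigma(\l)$ from \eqref{42}; hence $\|u(\g)\|_{\mc{C}(\bar\O)}\to\infty$.

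For the uniform blow--up \eqref{65} I would reverse the coercivity analysis of Lemma \ref{Le 51}. Put $t_\g:=\|u(\g)\|_{L^2(\O)}$ and $w_\g:=u(\g)/t_\g$. First $t_\g\to\infty$, since a bounded $L^2$ norm yields (as in \eqref{64}) an $H_0^1$ bound and a positive limiting solution at $\Sigma(\l)$. As $\|w_\g\|_{L^2(\O)}=1$ and $\int_\O|\nabla w_\g|^2$ remains bounded, $w_\g\rightharpoonup w_0$ in $H_0^1(\O)$ and strongly in $L^2(\O)$; dividing $\mc{E}_\g(u(\g))\le0$ by $t_\g^2$ and passing to the limit forces, exactly as in \eqref{56}--\eqref{p33}, that $w_0=0$ on $\O_+$ and that $w_0$ attains the infimum \eqref{exva}. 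Under the hypothesis of the proposition this minimizer is the smooth profile $\hat w$ of \eqref{311}, with $\hat w\gg0$ in $\O_0$; interior elliptic estimates for $(-\D-\l)w_\g=\g(-\D-\l)^{-1}w_\g$ on $\O_0$ promote $w_\g\to\hat w$ to uniform convergence on compact subsets of $\O_0$, so $u(\g)=t_\g w_\g\to\infty$ uniformly there and $\int_{\O_0}u(\g)\to\infty$.

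The blow--up is then spread to all of $\O$ by the non--local term. As $\l<\sigma_1$, the Green function of $(-\D-\l)^{-1}$ is strictly positive, so for each compact $K\subset\O$ there is $c_K>0$ with $g_\g(x):=\g(-\D-\l)^{-1}u(\g)(x)\ge c_K\int_{\O_0}u(\g)$ on $K$; thus $g_\g\to+\infty$ uniformly on $K$. Where $K$ meets $\bar\O_0$ the equation reads $-\D u(\g)-\l u(\g)=g_\g$ with no absorption, so $u(\g)\to\infty$; where $K$ meets $\bar\O_+$ I would bound $u(\g)$ from below on interior balls $B$ (compactly contained in $\O_+$) by the constant $s_\g$ fixed through $\max_{\bar B} a f(\cdot,s_\g)s_\g=\inf_{\bar B} g_\g$. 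Since $s\mapsto a f(\cdot,s)s$ is increasing and diverges by (Af)--(Ag), $s_\g\to\infty$, and a sub/supersolution comparison (valid because the absorption is monotone in $u$) produces a subsolution $\underline u_\g\to\infty$ on a smaller ball. A finite cover then yields $u(\g)\to\infty$ uniformly on $K$.

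The main obstacle is twofold. Analytically, the $\O_+$ step demands that the diverging non--local source $g_\g$ dominate the simultaneously diverging absorption $a f(\cdot,u)u$; turning this into a quantitative comparison---selecting $s_\g$ and the subsolution so the Laplacian term is absorbed---is the technical heart. More fundamentally, the whole of \eqref{65} is conditional: it presupposes that the infimum \eqref{exva} is attained at the smooth, strictly positive profile \eqref{311}, equivalently that \eqref{111} holds, and this profile is not yet known. Thus \eqref{63} and \eqref{64} are unconditional, whereas \eqref{65} is established only modulo that hypothesis. Finally, the convergence in \eqref{65} can be uniform only on compact subsets of the open set $\O$, since $u(\g)=0$ on $\p\O$.
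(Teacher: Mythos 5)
Your proofs of \eqref{63} and \eqref{64} are correct but take a genuinely different route from the paper's. For \eqref{63} the paper does not pass to a monotone limit at all: it recasts \eqref{18} as a fixed--point equation for the operator $\mf{G}(\g,u)=u-(-\D-\l)^{-1}\left[\g(-\D-\l)^{-1}u-af(\cdot,u)u\right]$, verifies the Crandall--Rabinowitz transversality condition at $(\g,u)=((\s_1-\l)^2,0)$, and reads \eqref{63} off from the fact that the positive solutions form the smooth branch bifurcating there from the trivial state. Your argument --- monotonicity of $\g\mapsto u(\g)$ from \eqref{62}, an $H_0^1$ bound plus elliptic compactness, and the observation that a nontrivial limit would be a coexistence state at $\g=(\s_1-\l)^2$, violating the strict inequality \eqref{41} of Proposition \ref{Pr 41} --- is more elementary and equally conclusive; what the bifurcation route buys in addition is the local structure of the solution curve near the bifurcation point. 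For \eqref{64} the two arguments are the same compactness scheme; the paper phrases the final contradiction as an IFT continuation of the curve beyond $\Sigma(\l)$ rather than as a violation of \eqref{42}, which is cosmetic. For \eqref{65}, your renormalization $w_\g=u(\g)/\|u(\g)\|_{L^2(\O)}$ and identification of the $L^2$ limit with the hypothesized minimizing profile \eqref{311} is exactly what the paper does; beyond that point, however, you do strictly more than the paper: the paper concludes by asserting that $\hat w$ is ``strictly positive in compact sets of $\bar\O$'', which is incompatible with $\hat w|_{\O_+}=0$ in \eqref{311}, and it never explains how the blow--up propagates into $\O_+$. Your non--local spreading argument (strict positivity of the Green function of $(-\D-\l)^{-1}$ forces $g_\g=\g(-\D-\l)^{-1}u(\g)\to\infty$ uniformly on compacts of $\O$, then comparison in $\O_+$) is precisely the missing step, so on this point your proposal is sounder than the source.

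Two repairs are needed to close your $\O_+$ step. First, a constant cannot serve directly as the comparison function, since nothing controls $u(\g)$ from below on $\p B$; use instead $\underline u_\g=c_\g\psi_B$, where $\psi_B$ is the normalized principal eigenfunction of $-\D$ in $B$ and $c_\g\to\infty$ is calibrated so that $c_\g\left(\s[-\D,B]+|\l|\right)+\max_{\bar B}\,af(\cdot,c_\g)c_\g\le\inf_{\bar B}g_\g$; then $\underline u_\g$ is a subsolution vanishing on $\p B$. Second, monotonicity of the absorption alone does not yield the comparison principle, because the full zero--order term $-\l s+af(\cdot,s)s$ need not be nondecreasing when $\l>0$; you also need $\s[-\D-\l,B]>0$, which here is automatic from $\l<\s_1$ and the domain monotonicity of the principal eigenvalue, so the maximum principle applies on $\{\underline u_\g>u(\g)\}$ and gives $u(\g)\ge\underline u_\g$. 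With these two points, and keeping your correct caveat that uniformity can only hold on compact subsets of $\O$ (not of $\bar\O$, since $u(\g)=0$ on $\p\O$), your proof of \eqref{65} is complete under the same profile hypothesis that the paper itself must assume; like the paper's, it remains conditional on \eqref{311} being the attained minimizer of \eqref{exva}.
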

\begin{proof}
The zeros of the functional 
$\mf{G} : \re \times \mc{C}_0(\bar \O) \rightarrow \mc{C}_0(\bar \O)$ 
denoted by
$$
	\mf{G}(\g,u):=u - (-\D-\l)^{-1}
	\left[ \g (-\D-\l)^{-1} u -af(\cdot,u)u\right],
$$
are fixed points of a compact operator $(-\D-\l)^{-1}$
(cf. \cite[Chapter 7]{LGB}),  
where $(-\D-\l)^{-1}$ stands for the inverse of $(-\D-\l)$ in $\O$ 
under homogeneous Dirichlet boundary conditions. The functional  
$\mf{G}$ is of class $\mc{C}^1$ and by elliptic regularity 
a compact perturbation of the identity for every $\g \in\re$. Moreover, 
$\mf{G}(\g,0)=0$ for all $\g\in\re$ and, also, by (Af)
$$
	D_u \mf{G}(\g,0)u=u-\g (-\D-\l)^{-2} u, \quad \g\in\re,\quad 
	u\in \mc{C}_0(\bar \O).
$$
Thus, the linear operator $D_u \mf{G}(\g,0)$ is Fredholm of index zero 
and is analytic in $\g$, for it is a compact perturbation 
of the identity of linear type with respect to $\g$. Moreover, 
its spectrum consists of the eigenvalues of $(-\D-\l)^{2}$. In 
particular,
$$
	N\left[D_u \mf{G}((\s_1-\l)^2,0)\right]=\mathrm{span}[\phi_1],
$$
where $\phi_1 \gg 0$ is any principal eigenfunction of 
$\s_1$. Then, the following transversality condition 
of Crandall--Rabinowitz \cite{CR,CRs} holds,
\begin{equation}
\label{66}
	D_\g D_u \mf{G}((\s_1-\l)^2,0) \phi_1 \notin R[D_u \mf{G}((\s_1-\l)^2,0)].
\end{equation}
To prove \eqref{66} we argue by contradiction assuming that 
$$
	D_\g D_u \mf{G}((\s_1-\l)^2,0) \phi_1
	= -(-\D-\l)^{-2}\phi_1 \in R[D_u \mf{G}((\s_1-\l)^2,0)].
$$
Then, there exists $u\in \mc{C}_0(\bar\O)$ such that 
$$
	u-(\s_1-\l)^2 (-\D-\l)^{-2} u = -(-\D-\l)^{-2}\phi_1,
$$
and, hence,  
$$
	(-\D-\l)^{2}u-(\s_1-\l)^2 u = -\phi_1.
$$
Now, multiplying by $\phi_1$, integrating in $\O$ and applying 
the formula of integrating by parts gives $\int_\O \phi_1^2=0$, 
which is impossible. Therefore, \eqref{66} is actually 
true. Consequently, according to the main theorem of Crandall--Rabinowitz 
\cite{CR} $(\g,u)=((\s_1-\l)^2,0)$ is a bifurcation 
point from the branch of trivial solutions $(\g,u)=(\g,0)$ from 
which a smooth curve of positive solutions emanates. Indeed, that 
continuum of positive solutions emanating from the bifurcation 
point as was seen above is of class $\mc{C}^1$ and increasing pointwise 
with respect to the parameter $\g$. Note that after the 
characterization result obtained in the previous sections \eqref{18} 
cannot admit a positive solution if $\g \leq (\s_1-\l)^2$.
\par
Subsequently, to prove \eqref{64} we apply a compactness argument 
shown in \cite[Chapter 7]{LGB}. Then, by the
monotonicity of $\g \mapsto u(\g)$ and arguing by contradiction, 
there exists a constant $C>0$ such that
\begin{equation}
\label{67}
  u(\g) \leq C \qquad \hbox{in}\;\; \O, \qquad \forall \;\; 
  \g \in ((\s_1-\l)^2,\Sigma(\l)).
\end{equation} 
Hence, let $\{\g_n\}_{n\geq 1}$ be an increasing sequence 
such that $0<\g_n<\g_m$ if $n<m$ and
$$
  \lim_{n\to \infty}\g_n=\Sigma(\l).
$$
Then, take 
a convergent sequence $\{u_n\}_{n\geq 1}$, such that 
$u_n \rightarrow u_*\leq C $ as $n \rightarrow \infty$. 
So, multiplying \eqref{18} by $u_n$, integrating in $\O$ and 
applying the formula of integrating by parts gives 
$$
	\int_\O |\nabla u_n|^2 = \l \int_\O u_n^2 +\g_n \int_\O |(-\D-\l)^{-1/2}u_n|^2 
	-\int_\O a(x)f(x,u_n) u_n^2,
$$
and thanks to \eqref{67}, we find that $\int_\O |\nabla u_n|^2\leq K$, for some positive 
constant $K>0$. Then, by Agmon--Douglis--Nirenberg $L^p$ estimations  
$\left\|\nabla u_n\right\|_{L^\infty}\leq K$, for any $n \geq 1$. 
Hence, taking $x,y\in\O$ sufficiently close such that 
$\left\|x-y\right\|\leq \frac{\e}{K_1}$ 
for some $\e>0$ sufficiently small and a positive constant $K_1>0$ 
we have that 
\begin{align*}
	|u_n(x)  -u_n(y) | & =| \int_0^1 \frac{d}{dt} u_n(tx+(t-1)y)dt|
	\leq \int_0^1 |\left\langle \nabla u_n(tx+(t-1)y),x-y \right\rangle| dt
	\\ & \leq \int_0^1 \left\|\nabla u_n(tx+(t-1)y)\right\|_{L^\infty} 
	\left\| x-y \right\|_{L^\infty}dt \leq K_1 \left\| x-y \right\|_{L^\infty}
	\leq \e. 
\end{align*}
Consequently, $\{u_n\}_{n\geq 1}$ is a bounded and 
equicontinuous family in $\mc{C}_0(\bar\O)$ and by 
the Ascoli--Arzel\'a theorem there exists a convergent 
subsequence that we relabel in the same way $\{u_n\}$, 
such that $u_n \rightarrow u_*$ in $\mc{C}_0(\bar\O)$. 
Moreover, since the solutions of \eqref{18} are fixed points of the 
equation 
\begin{equation}
\label{68}
	u_n = (-\D-\l)^{-1}
	\left[ \g_n (-\D-\l)^{-1} u_n -af(\cdot,u_n)u_n\right],
\end{equation}
passing to the limit \eqref{68} actually shows that by the IFT curve of solutions 
can be extended beyond $\Sigma(\l)$, which is impossible. Therefore, \eqref{64} 
holds.
\par
Finally, to prove \eqref{65} we choose any increasing 
sequence $\g_n \uparrow \Sigma(\l)$. Let us set 
$w_n:=\frac{u_n}{\left\|u_n\right\|_{L^2(\O)}}$. Then, from 
\eqref{18}, after dividing the equation by the norm $\left\|u_n\right\|_{L^2(\O)}$, multiplying by $w_n$ , and then 
integrating by parts in $\O$, we find that 
$$
	\int_\O |\nabla w_n|^2 + \int_\O a(x) f(x,u_n) w_n^2 = 
	\l +\a\b \int_\O \left|(-\D-\l)^{-1/2} w_n\right|^2 \;,
$$
hence, with a similar argument as the one applied to prove the 
coercivity of the functional $\mc{E}_\g$ there exists a subsequence 
again labelled by n such that 
$$
 \lim_{n\to\infty} \|w_n-\hat{w}\|_{L^2(\O)}=0,
$$
where, $\hat{w}|_{\O_+}=0$ and $\hat{w}|_{\O_0}>0$ also in this case 
and satisfies 
$$
	\frac{\int_{\O} |\nabla \hat{w}|^2 -\l}{\int_{\O} 
	\left|(-\D-\l)^{-1/2} \hat{w}\right|^2} 
	\leq \Sigma(\l).
$$
Moreover, we claim that the equality can only hold if $\hat{w}$ has the 
form shown by \eqref{311} and belonging 
to the space $\mc{C}^2(\bar \O_0)\cup L^\infty(\O_+)$. Therefore, since 
$\hat{w}$ is strictly positive in compact sets of $\bar\O$ and owing 
to \eqref{64} we can conclude that \eqref{65} is true. This completes 
the proof.
\end{proof}

\begin{remark}
\label{Re 62}
{\rm \begin{enumerate}
\item[(i)]  Note that due to the cooperative character of the system \eqref{11},  
as mentioned above, no semi-trivial solutions are allowed. Therefore, from 
Proposition\;\ref{Pr 61} it might be also concluded that 
$(\g,u,v)=((\s_1-\l)^2,0,0)$ is a bifurcation point from 
the branch of trivial solutions and, hence, 
$\lim_{\g\downarrow (\s_1-\l)^2} v(\g)=0$ is true. 
\item[(ii)]  Furthermore, the limiting behaviour at the upper bound 
of the parameter $\g$ obtained for $u(\g)$ might be extended to 
the second component of the coexistence 
states $v(\g)$. Actually, 
according to \eqref{11}, after a straightforward calculation 
it is easily seen that 
$$
  (-\D+\sqrt{\g}-\l)\left(\sqrt{\a}\;v(\g)-\sqrt{\b}\; u(\g)\right)
  = \sqrt{\b}\; a f(\cdot,u(\g))u(\g) \qquad \hbox{in}\;\;\O.
$$
Moreover, since it has been imposed that $\l<\s_1$ we have 
$$
  \s[-\D+\sqrt{\g}-\l,\O]=\s_1-\l + \sqrt{\g}>\sqrt{\g}>0.
$$
Thus, owing to \cite[Theorem\;2.5]{Lo3}, we find that
$$
  \sqrt{\a}\;v(\g)= \sqrt{\b}\; u(\g)+
 \sqrt{\b} (-\D+\sqrt{\g}-\l)^{-1} \left( a f(\cdot,u(\g))u(\g) \right)
$$
and, therefore,
\begin{equation}
\label{e6}
  \sqrt{\a}\;v(\g) > \sqrt{\b}\; u(\g).
\end{equation}
\item[(iii)]  Finally, we would like to point out 
the strength of those cooperative systems applies to every component 
forces both components to behave in a similar way. Then a very natural extension 
of this work could be the consideration of those cooperative terms 
as functions with enough regularity instead of parameters as it has been assumed here.
\end{enumerate}
}
\end{remark}

\vspace{0.4cm}

\noindent{\bf Acknowledgements.} The author would like to express his deepest gratitude to the reviewer 
of this work for all the suggestions and corrections made to improve this work. Also, he would like 
to thank Professors Mariano Giaquinta and Pietro Majer for their help and encouragement  
in the realization of this work during the two years 
the author spent at the research centre Ennio De Giorgi--Scuola Normale Superiore of Pisa.

\vspace{0.4cm}

\end{document}